\newcommand{\ignore}[1]{}
\renewcommand{\Re}{\operatorname{Re}}
\renewcommand{\Im}{\operatorname{Im}}
\newcommand{\abs}[1]{\left\lvert {#1} \right\rvert}
\newcommand{\sabs}[1]{\lvert {#1} \rvert}
\newcommand{\norm}[1]{\left\lVert {#1} \right\rVert}
\newcommand{\snorm}[1]{\lVert {#1} \rVert}
\newcommand{\C}{{\mathbb{C}}}
\newcommand{\R}{{\mathbb{R}}}
\newtheorem{thm}{Theorem}[section]
\newtheorem{prop}[thm]{Proposition}
\newtheorem{cor}[thm]{Corollary}
\newtheorem{lemma}[thm]{Lemma}
\theoremstyle{definition}
\newtheorem{example}[thm]{Example}
\theoremstyle{remark}
\author{Ji\v{r}\'{\i} Lebl}
\thanks{The first author was in part supported by NSF grant DMS-1362337 and
Oklahoma State University's DIG and ASR grants.}
\address{Department of Mathematics, Oklahoma State University,
Stillwater, OK 74078, USA}
\email{lebl@math.okstate.edu}
\author{Alan Noell}
\address{Department of Mathematics, Oklahoma State University,
Stillwater, OK 74078, USA}
\email{noell@math.okstate.edu}
\author{Sivaguru Ravisankar}
\address{School of Mathematics, Tata Institute of Fundamental Research,
Mumbai 400005, India}
\email{sivaguru@math.tifr.res.in}
\date{June 1, 2017}
\title%
{Extension of CR functions from boundaries in $\C^n \times \R$}
\begin{document}

%\doublespace

\begin{abstract}
Let $\Omega \subset \C^n \times \R$ be a bounded domain with smooth
boundary such that $\partial \Omega$ has only nondegenerate
%, holomorphically flat,
elliptic CR
singularities,
and let $f \colon \partial \Omega \to \C$ be a smooth function that is CR at CR
points of $\partial \Omega$ (when $n=1$
we require separate holomorphic extensions for each real parameter).  Then
$f$ extends to a smooth CR function on $\overline{\Omega}$, that is, an
analogue of Hartogs-Bochner holds.
In addition, if $f$ and $\partial \Omega$ are real-analytic, then $f$ is the restriction of a function
that is holomorphic on
a neighborhood of $\overline{\Omega}$ in $\C^{n+1}$.
An immediate application is a (possibly singular) solution of the Levi-flat
Plateau problem for codimension 2 submanifolds that are CR images of
$\partial \Omega$ as above.
The extension also holds locally near nondegenerate, holomorphically
flat, elliptic CR singularities.
\end{abstract}

\maketitle

%\enlargethispage{\baselineskip}

%%%%%%%%%%%%%%%%%%%%%%%%%%%%%%%%%%%%%%%%%%%%%%%%%%%%%%%%%%%%%%%%%%%%%%%%%%%%

\section{Introduction} \label{section:intro}

Let $\Omega \subset \C^n \times \R$ be a bounded domain with smooth
boundary, with coordinates
$(z,s) \in \C^n \times \R$.
We ask:
\emph{When does a smooth (i.e., $C^\infty$) function $f \colon \partial \Omega \to \C$ extend
smoothly to $\overline{\Omega}$ such that the extension is holomorphic in the first $n$
variables, that is, holomorphic in $z$?}
As $\Omega$ is Levi-flat, we want the extension to be a CR
function on $\Omega$ and smooth on $\overline{\Omega}$.
Clearly, for $n > 1$, for a fixed $s$, the function $f$ must be a CR function to begin
with.  I.e., it must satisfy the tangential Cauchy-Riemann equations outside the CR
singularities of $\partial \Omega$.
When $n=1$, $f$ must extend holomorphically, separately for each fixed $s$.
Our model case is the sphere:
\begin{equation}
\Omega
= \{ (z,s) \in \C^n \times \R : \snorm{z}^2+\abs{s}^2 < 1 \} . %, \\
\end{equation}
We prove that the conditions given above (CR for $n > 1$ and ``extendible
along leaves'' for $n=1$) are sufficient under
very natural conditions on the CR
singularities (e.g., the north and south pole of the sphere),
that is, the singularities are nondegenerate and elliptic.

A real submanifold $M \subset \C^{n+1}$ of
real codimension 2
generically has isolated CR singularities.
CR singular submanifolds of codimension 2 were first studied in
$\C^2$ by
E.~Bishop~\cite{Bishop65}, who found that 
such nondegenerate submanifolds $M$ are locally of the form
\begin{equation}
w = z\bar{z} + \lambda (z^2+\bar{z}^2) + O(3) ,
\end{equation}
where $0 \leq \lambda \leq \infty$ ($\lambda = \infty$ is interpreted
appropriately).  The constant $\lambda$ is called the Bishop invariant.
If $\lambda < \frac{1}{2}$ then $M$ is said to be elliptic; the expression
$z\bar{z} + \lambda (z^2+\bar{z}^2) = \text{constant}$ gives ellipses in the
$z$ directions.  The case $\lambda = \frac{1}{2}$ is called parabolic,
and $\lambda > \frac{1}{2}$ is called hyperbolic.
Bishop's motivation was to understand the hull of holomorphy of
$M$ via attaching analytic discs to $M$.  In the elliptic case, fixing
$w$ at a fixed constant gives a family of analytic discs shrinking down to
zero as $w \to 0$.
The work of Bishop in $\C^2$,
especially in the elliptic case, has been refined by
Moser-Webster~\cite{MoserWebster83},
Moser~\cite{Moser85},
Kenig-Webster
\cites{KenigWebster:82, KenigWebster:84}, Gong~\cite{Gong94:duke},
Huang-Krantz~\cite{HuangKrantz95}, 
Huang~\cite{Huang:jams}, Huang-Yin~\cite{HuangYin09}, and others.

We are interested in the generalization of the elliptic case 
to higher dimensions.  For prior work on the local case,
mainly on the normal form of CR
singularities, see Huang-Yin~\cites{HuangYin09:codim2,HuangYin:flattening},
Gong-Lebl~\cite{GongLebl}, Burcea~\cites{Burcea,Burcea2}.
In general, it is not possible to
flatten a CR singular submanifold $M$, to realize it as a submanifold
of the Levi-flat hypersurface $\{ \Im w = 0 \}$.
In dimension 3 and higher, existence of such a Levi-flat hypersurface requires
nongeneric conditions on $M$, see \cites{DTZ,HuangYin:flattening}.
In the elliptic case we think of one side of
this hypersurface as a submanifold $H$ with boundary $M$, where $H$ is 
the holomorphic hull of $M$.
Dolbeault-Tomassini-Zaitsev~\cites{DTZ,DTZ2}
considered a Levi-flat analogue of the Plateau problem.  They proved that
a compact CR singular
submanifold satisfying certain conditions (in particular elliptic
CR singularities and a degenerate Levi-form at CR points) is the boundary of
a (singular) Levi-flat hypersurface.

Harris~\cite{Harris} studied the
extension of real-analytic CR functions near CR singularities.
For $M \subset \C^m$, Harris provides a
criterion for extension if
$\dim_{\R} M \geq m$, but the condition can be difficult to verify.
The local extension in the real-analytic
case is possible to obtain using the work of Harris, but we provide
a different proof.
Recently in \cite{LMSSZ} nonextensibility was studied for
a (nonelliptic) CR singular manifold that is a CR image of a CR manifold.

We study nondegenerate, holomorphically flat, and elliptic CR singularities.
See section \ref{section:nondegflatelliptic} for the precise definitions.
These conditions are natural for the extension problem.  Some sort of
nondegeneracy is required, flatness is the setting of the problem, 
and ellipticity
gives bounded domains
shrinking in size as we approach the singularity.
The conditions are an analogue of strict pseudoconvexity for flat CR singular submanifolds.
Nondegenerate elliptic holomorphically flat CR singularities can be put into the following form
locally:
\begin{equation} \label{eq:typeofsing}
M: w = \sum_{j=1}^{n} \bigl( \sabs{z_j}^2 + \lambda_j (z_j^2 + \bar{z}_j^2)
\bigr) +
E(z,\bar{z}) ,
\end{equation}
where $0 \leq \lambda_j < \frac{1}{2}$, and
$E$ is $O(3)$, smooth, and real-valued.
The Levi-flat hypersurface $H$
whose boundary is $M$ is given by
\begin{equation} \label{eq:typeofsingh}
H:  \begin{cases}
\Re w \geq \sum_{j=1}^{n} \bigl( \sabs{z_j}^2 + \lambda_j (z_j^2 + \bar{z}_j^2)\bigr) + E(z,\bar{z}) , \\
\Im w = 0.
\end{cases}
\end{equation}

Consider $H$ and $M$ in $U = \{ (z,w) \in \C^n \times \C : 
\snorm{z} < \delta_z, \sabs{w} < \delta_w \}$.  We say 
$\delta_z,\delta_w >0$ are \emph{small enough} if for all $\sabs{w_0} < \delta_w$
the set $\{ w = w_0 \} \cap H$ is compact or empty, and such that
$M$ has a CR singularity only at the origin.  For an elliptic
singularity we can always pick small enough $\delta_z,\delta_w > 0$.

\begin{thm} \label{thm:mainlocal}
Suppose $H$ and $M$ are closed submanifolds of $U = \{ (z,w) \in \C^n \times \C : 
\snorm{z} < \delta_z, \sabs{w} < \delta_w \}$ given by
\eqref{eq:typeofsing} and \eqref{eq:typeofsingh}, $E$ is real-valued, 
$0 \leq \lambda_j < \frac{1}{2}$ for all $j$ (i.e.\ $M$ is nondegenerate,
holomorphically flat, and elliptic) and $\delta_z,\delta_w > 0$ are small
enough.

Suppose $f \colon M \to \C$ is smooth and either
\begin{enumerate}[(i)]
\item $n > 1$ and $f$ is a CR function on $M_{CR}$ (the CR points of $M$), or
\item $n = 1$ and for every $0< c < \delta_w$, there exists a 
continuous function on $H \cap \{ w = c \}$, holomorphic on $(H \setminus M)
\cap \{ w = c \}$, extending $f|_{M \cap \{ w = c \}}$.
\end{enumerate}

Then there exists a function $F \in C^\infty(H)$ such that
$F$ is CR on $H \setminus M$ and $F|_M = f$.
Furthermore, $F$ has a formal power series at 0 in $z$ and $w$.
If $M$ and $f$ are real-analytic, then 
$F$ is a restriction of a holomorphic function defined in a
neighborhood of $H$ in $\C^{n+1}$ (in particular $F$ is real-analytic).
\end{thm}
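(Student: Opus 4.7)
The plan is to construct $F$ leaf by leaf via Hartogs--Bochner, verify smooth dependence on the transverse parameter away from the CR singularity, and separately establish smoothness through the singular point by means of a formal power series argument. Write $M$ as $w = \rho(z,\bar z)$ with $\rho$ the right side of \eqref{eq:typeofsing}. For each $0 < s < \delta_w$, the slice $H_s := H \cap \{w = s\}$ is a smoothly bounded domain in $\C^n$ with boundary $M_s := M \cap \{w = s\}$. Since $0 \le \lambda_j < 1/2$ and $\delta_z,\delta_w$ are small, $\rho$ is strictly plurisubharmonic on $U$, so $M_s$ is a smooth, strictly pseudoconvex small $C^\infty$ perturbation of an ellipsoid. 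In case (i), Hartogs--Bochner applied to $f|_{M_s}$ yields $F_s$ holomorphic on $H_s^\circ$ and continuous on $\overline{H_s}$; in case (ii) the extension is supplied by hypothesis. Standard boundary regularity at smooth strictly pseudoconvex boundaries (or the reflection principle when $n=1$) gives $F_s \in C^\infty(\overline{H_s})$.

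Setting $F(z,s) := F_s(z)$, the Bochner--Martinelli (resp.\ Cauchy) representation on $M_s$, combined with differentiation under the integral in $s$ and $z$, yields $F \in C^\infty(H \setminus \{0\})$; the CR condition on $H \setminus M$ is immediate since $F$ is holomorphic on each leaf. To handle the CR singularity, I would first construct a formal power series $\hat F(z,w) = \sum c_{\alpha,k} z^\alpha w^k$ solving $\hat F\bigl(z, \rho(z,\bar z)\bigr) = f(z,\bar z)$ as formal series in $(z,\bar z)$. The quadratic part of $\rho$ is nondegenerate and elliptic, so one can solve inductively on the total $(z,\bar z)$-degree, with the CR compatibility of $f$ at CR points supplying consistency at each order. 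Borel's lemma then produces $G \in C^\infty(\C^n \times \R)$ whose Taylor series at $0$ matches $\hat F|_{\Im w = 0}$; the corrected data $g := f - G|_M$ is smooth, vanishes to infinite order at $0$, and remains CR (or separately extendible in the $n=1$ case) at CR points. Rerunning the leafwise extension on $g$ produces $\tilde F_s$ with $\sup_{H_s}|\tilde F_s| = O(s^N)$ for every $N$ by the maximum principle; Cauchy estimates inside $H_s$, whose diameter is of order $\sqrt{s}$ by ellipticity, propagate this decay to all derivatives, so $\tilde F$ extends smoothly to $(0,0)$ with vanishing Taylor series, and $F := \tilde F + G|_H$ is the desired smooth CR extension.

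When $M$ and $f$ are real-analytic, a majorant argument leveraging the elliptic quadratic part of $\rho$ shows that $\hat F$ converges, giving a holomorphic $\tilde F$ on a neighborhood of $0$ in $\C^{n+1}$; away from $0$, $M$ is a generic real-analytic CR submanifold of $\C^{n+1}$, so $f$ extends holomorphically to a one-sided neighborhood of each CR point, and uniqueness along leaves patches these with $\tilde F$ and with the leafwise extensions into a holomorphic function on a neighborhood of $H$ in $\C^{n+1}$. The main obstacle is the step of promoting the formal matching at $0$ to genuine $C^\infty$ smoothness: the leaves $H_s$ degenerate to a point as $s \to 0^+$, so naive Cauchy estimates on the shrinking leaves threaten to blow up, and the ellipticity-enforced geometric scale $\sqrt{s}$ must be carefully balanced against the infinite-order vanishing of $g$ provided by Borel's correction to conclude uniform decay of $\tilde F$ and all its derivatives.
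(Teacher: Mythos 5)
Your skeleton matches the paper's---leafwise Hartogs--Bochner extension, a formal polynomial extension at the origin tied to the elliptic quadratic model, and then a passage from formal to genuine smoothness at the CR singular point---but the crucial last step, which you correctly flag as ``the main obstacle,'' is left unresolved, and the mechanism you suggest cannot close it. Cauchy estimates on the shrinking leaves $H_s$ control only the leaf-tangential derivatives $\partial_z^\alpha \tilde F$; they say nothing at all about $\partial_s \tilde F$, since $\partial/\partial s$ is transverse to the leaf and is not a holomorphic derivative in the slice. Even the $z$-derivative bounds carry a factor $d(z,M_s)^{-|\alpha|}$ and so degenerate at the boundary, so by themselves they do not give smoothness up to $M_s$. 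There is also a gap in the Borel step: the standard Borel construction produces a smooth $G(z,\bar z,s)$ with the prescribed Taylor jet, but not one that is holomorphic in $z$ on each leaf, so $G|_M$ need not be CR and the corrected datum $g = f - G|_M$ need not satisfy the hypotheses of the theorem; one would instead have to truncate to a holomorphic polynomial $P_{\le N}(z,w)$ of finite weighted degree (which Lemma~\ref{lem:OrderK} supplies), at the cost of only finite-order vanishing of the remainder.

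The paper avoids shrinking-leaf estimates entirely. Away from the singularity, the transverse derivative $F_s$ is controlled up to the boundary by a maximum-principle comparison of nearby leaves (Lemma~\ref{lem:SmExtnHar}, fed into Lemma~\ref{lem:SmExtnCR}), not by differentiating under the Bochner--Martinelli integral. Through the singularity, Lemma~\ref{lem:DerFSm} works on $M$ itself rather than on $H$: with $\xi_j = \rho_{\bar z_j}$ one has $f_{\bar z_j} = F_s\,\xi_j$ on $M\setminus\{0\}$; Lemma~\ref{lem:OrderK} shows $f_{\bar z_j}$ lies formally in the ideal $(\xi_j)$, and the Malgrange--Mather division theorem together with Malgrange's theorem on closed ideals promotes this to genuine smooth divisibility, so $F_s|_M$ extends smoothly through $0$. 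Continuity of $F$ at $0$ is then cheap (Lemma~\ref{lem:ExtnConts}), and one iterates with $F_{z_j}$ and $F_s$ in place of $f$. If you wish to pursue the shrinking-leaf approach you must supply an independent argument for $\partial_s \tilde F$; the paper's Lemma~\ref{lem:SmExtnHar} shows how delicate such an argument already is in the nonsingular $n=1$ case.
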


The case of $n=1$ is somewhat different:  $M$ is totally-real
at CR points, and any function is a CR function.  Most
smooth functions on $M$ do not extend to $H$ as CR functions at all; therefore the further condition 
that $f$ extend along the ``leaves'' of $H$ is natural.  The
condition is equivalent to the vanishing of certain Fourier
coefficients, that is
for each $s>0$ and $\ell \geq 0$,
\begin{equation}
\smashoperator[r]{\int\limits_{M\cap\{w = s\}}}
f\left(\zeta,\bar{\zeta}\right)\zeta^\ell \, d\zeta = 0 .
\end{equation}
See the proof of Lemma~\ref{lem:OrderK}.
We note that in the local case, we always parametrize $M$ by $z$,
and we think of $f$ as a function of $z$, so we write $f(z,\bar{z})$.

We also prove a global extension 
with the aid of the Bochner-Martinelli kernel.

\begin{thm} \label{thm:mainglobal}
Suppose
$\Omega \subset \C^n \times \R$ is a bounded domain with smooth boundary.
Let $(z,s) \in \C^n \times \R$ be the coordinates.
Suppose all CR singularities of $\partial \Omega$ are nondegenerate
%, holomorphically flat,
and elliptic.
Suppose $f \colon \partial \Omega \to \C$ is smooth and
either
\begin{enumerate}[(i)]
\item $n > 1$ and $f$ is a CR function on ${(\partial \Omega)}_{CR}$, or
\item $n = 1$ and for every $c \in \R$ where
$\Omega \cap \{ s = c \}$ is nonempty, there exists a
continuous function on $\overline{\Omega} \cap \{ s = c \}$, holomorphic on
$\Omega \cap \{ s = c \}$, extending $f|_{\partial \Omega \cap \{ s = c \}}$.
\end{enumerate}

Then there exists a function $F \in C^\infty(\overline{\Omega})$ such that
$F$ is CR on $\Omega$ and $F|_{\partial \Omega} = f$.
Furthermore, if $\partial \Omega$ and $f$ are real-analytic, then
$F$ is a restriction of a holomorphic function defined in a
neighborhood of $\overline{\Omega}$ in $\C^{n+1}$
(in particular $F$ is real-analytic).
\end{thm}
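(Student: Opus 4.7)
The plan is to combine the local extensions supplied by Theorem~\ref{thm:mainlocal} near each CR singularity with a leaf-by-leaf extension constructed via the Bochner-Martinelli integral on the regular part of $\overline{\Omega}$, gluing the two together via a slice-wise uniqueness argument. Since $\partial\Omega \subset \C^n\times\R$ is automatically holomorphically flat, and nondegenerate CR singularities are isolated (hence only finitely many $p_1,\dots,p_N$ on the compact boundary), Theorem~\ref{thm:mainlocal} applies at each $p_i$ and yields a smooth CR extension $F_i$ on a relatively open neighborhood $W_i$ of $p_i$ in $\overline{\Omega}$, with $F_i = f$ on $\partial\Omega \cap W_i$.

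Let $S \subset \R$ be the finite set of $s$-coordinates of the singularities. For each $s \notin S$ with $V_s := \Omega \cap (\C^n\times\{s\})$ nonempty, the slice $V_s$ is a bounded open subset of $\C^n$ whose smooth boundary $\partial V_s = \partial\Omega \cap (\C^n\times\{s\})$ consists of CR points, since $\partial\Omega$ is transverse to $\{s'=s\}$ there. The tangential CR condition on $f$ (for $n>1$) gives, via Hartogs-Bochner, or the explicit hypothesis (for $n=1$) directly yields, a continuous extension of $f(\cdot,s)$ to $\overline{V_s}$ holomorphic on $V_s$; this extension is the Bochner-Martinelli integral
\begin{equation}
F(z,s) = \int_{\partial V_s} f(\zeta,s)\, K_{BM}(\zeta,z),
\end{equation}
where $K_{BM}$ is the standard Bochner-Martinelli kernel on $\C^n$. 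Smoothness of $F$ in $(z,s)$ for $s \notin S$ on compact subsets of $\Omega$ follows from smoothness of $f$ and smooth parametric dependence of $\partial V_s$ on $s$; boundary regularity up to smooth CR points of $\partial\Omega$ is standard for the Bochner-Martinelli transform.

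To reconcile the two constructions, observe that on the overlap $W_i \cap V_s$ for $s$ near but not equal to the $s$-coordinate of $p_i$, both $F_i(\cdot,s)$ and the leaf-wise $F(\cdot,s)$ are continuous on $\overline{V_s}$, holomorphic on $V_s$, and agree with $f(\cdot,s)$ on $\partial V_s$; by the maximum principle on each connected component they coincide. The $F_i$ thus furnish the regularity of $F$ at and across the critical heights $s \in S$, where the slices $V_s$ pinch toward the singularities $p_i$ and the Bochner-Martinelli formula degenerates. This yields a globally defined $F \in C^\infty(\overline{\Omega})$, CR on $\Omega$, with $F|_{\partial\Omega} = f$.

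In the real-analytic case, Theorem~\ref{thm:mainlocal} gives each $F_i$ as the restriction of a holomorphic function on a neighborhood of $p_i$ in $\C^{n+1}$. Away from singularities, real-analyticity of $f$ and $\partial\Omega$ makes $F(z,s)$ real-analytic in $s$ and holomorphic in $z$; such a function extends uniquely to a holomorphic $\tilde F(z,w)$ on a neighborhood of $\overline{\Omega}$ in $\C^{n+1}$ by convergence of the Taylor series in $w$ about each real point. Matching with the $F_i$ gives the global holomorphic extension. The main obstacle throughout is precisely the behavior at the singular heights $s \in S$: Theorem~\ref{thm:mainlocal} is what supplies the regularity needed there, and the slice-wise maximum principle is what allows clean gluing with the leaf-by-leaf Bochner-Martinelli construction.
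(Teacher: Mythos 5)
Your overall strategy matches the paper's: extend leaf-by-leaf via Bochner--Martinelli away from the CR singularities (the paper's Lemma~\ref{lem:SmExtnCR}), invoke Theorem~\ref{thm:mainlocal} near each singular point, and glue via slice-wise uniqueness. You also correctly note that holomorphic flatness is automatic for $\partial\Omega\subset\C^n\times\R$, so the local theorem applies.

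There is, however, a genuine gap: you never establish the topological structure of $\partial\Omega$ and its slices, which the paper handles in Proposition~\ref{prop:topology} before doing anything else. Specifically, the paper shows that the ellipticity hypothesis forces $\partial\Omega$ to be a topological sphere with \emph{exactly two} CR singular points, and that every nonempty slice $\partial\Omega\cap\{s=c\}$ is connected. This is not a cosmetic point. For $n>1$ the Hartogs--Bochner theorem requires a connected boundary (or a connected complement) for the Bochner--Martinelli integral to produce a holomorphic extension agreeing with $f$; if a slice were an annular region with two boundary components, the transform would in general not extend $f$, and the compatibility between the inner and outer boundary data would be an extra condition you have not verified. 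For $n=1$, Lemma~\ref{lem:SmExtnHar} likewise assumes the slices are connected. Your assertion that ``nondegenerate CR singularities are isolated, hence finitely many $p_1,\dots,p_N$'' is true but much weaker than what is needed; you must rule out disconnected slices, and the way to do that is precisely the observation that elliptic singularities are ``caps'' that create or destroy a single spherical component, so no merging, splitting, or annular behavior of slices can occur. Without this step the leaf-by-leaf extension is not well defined, and the gluing argument never gets off the ground. Once Proposition~\ref{prop:topology} is in place, the rest of your outline (gluing via the maximum principle on slices close to each $p_i$, and the real-analytic variant via the real-analytic version of Lemma~\ref{lem:SmExtnCR} and Lemma~\ref{lem:localraext}) coincides with the paper's proof.
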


We think of this theorem as an analogue of Hartogs-Bochner
in $\C^{n} \times \R$
with smooth data.
It should be noted that the hypotheses on the CR singularities imply
$\partial \Omega$ is homeomorphic to a sphere (see
Proposition~\ref{prop:topology}) and so 
$\partial \Omega$ is connected.
Using the global version of the extension, we immediately obtain the following
(singular) solution to the Levi-flat Plateau problem for CR images of
submanifolds of $\C^n \times \R$.

\begin{cor}
Suppose $\Omega \subset \C^{n} \times \R$, $n > 1$, is a bounded domain with smooth
boundary, and $M = f(\partial \Omega) \subset \C^{n+1}$ is the image
of a smooth map $f$ that is CR on ${(\partial \Omega)}_{CR}$.  Suppose all
CR singularities of $\partial \Omega$ are nondegenerate 
%, holomorphically flat,
and elliptic.
Then there exists a smooth map $F \colon \overline{\Omega} \to \C^{n+1}$ such that
$F$ is CR on $\Omega$ and $F|_{\partial \Omega} = f$ (in particular $F(\partial \Omega) =
M$).  Wherever
$F(\overline{\Omega})$ is a smooth real-hypersurface, it is Levi-flat.

Furthermore, if $f$ and $\partial \Omega$ are real-analytic,
then $F$ is the restriction of a holomorphic function defined in a
neighborhood of $\overline{\Omega}$ in $\C^{n+1}$.
\end{cor}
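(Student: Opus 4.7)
The plan is to reduce to Theorem~\ref{thm:mainglobal} applied componentwise. Write $f = (f_1, \ldots, f_{n+1})$ with each $f_j \colon \partial\Omega \to \C$ smooth; the CR condition on $f$ passes to each component, so each $f_j$ is CR on ${(\partial\Omega)}_{CR}$. Since $n > 1$, hypothesis (i) of Theorem~\ref{thm:mainglobal} is satisfied by each $f_j$, producing $F_j \in C^\infty(\overline{\Omega})$ that is CR on $\Omega$ with $F_j|_{\partial\Omega} = f_j$. Set $F = (F_1, \ldots, F_{n+1}) \colon \overline{\Omega} \to \C^{n+1}$; by construction $F$ is smooth on $\overline{\Omega}$, CR on $\Omega$, and satisfies $F|_{\partial\Omega} = f$, so in particular $F(\partial\Omega) = M$. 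In the real-analytic case, I would apply the final sentence of Theorem~\ref{thm:mainglobal} to each $f_j$ to obtain holomorphic extensions to open neighborhoods $U_j$ of $\overline{\Omega}$ in $\C^{n+1}$, and take $U = \bigcap_j U_j$ together with the assembled map $(F_1, \ldots, F_{n+1}) \colon U \to \C^{n+1}$.

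For the Levi-flatness statement, the key observation is that $\Omega$ is foliated by its open complex $n$-dimensional slices $\Omega_c = \Omega \cap \{s = c\}$, and the CR condition on $F$ is precisely that $F|_{\Omega_c}$ is holomorphic for each $c$. Hence each $F(\Omega_c) \subset \C^{n+1}$ is the holomorphic image of an open subset of $\C^n$. At any point $p_0$ where $F(\overline{\Omega})$ is a smooth real hypersurface $\Sigma$ in $\C^{n+1}$, the image $F(\Omega_c)$ through $p_0$ lies locally in $\Sigma$ and passes through $p_0$ as a complex $n$-dimensional set; since a real hypersurface in $\C^{n+1}$ locally foliated by complex $n$-dimensional submanifolds is Levi-flat, the claim follows.

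There is essentially no obstacle beyond Theorem~\ref{thm:mainglobal} itself: once componentwise extension is granted, the only new content is the Levi-flatness of the smooth part of $F(\overline{\Omega})$, which is immediate from leafwise holomorphy. Note also that $F$ may fail to be an embedding (the image can have self-intersections or singularities), which is exactly why the statement restricts the Levi-flatness conclusion to those points of $F(\overline{\Omega})$ that happen to be smooth hypersurface points.
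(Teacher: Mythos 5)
Your proposal is correct and matches the paper's intent: the paper itself presents this corollary as an immediate consequence of Theorem~\ref{thm:mainglobal} applied componentwise, with no separate proof given, and the componentwise extension plus leafwise-holomorphy argument for Levi-flatness is exactly what is meant. One small point worth tightening in the Levi-flatness step: it is not automatic that $F(\Omega_c)$ passes through a given smooth-hypersurface point $p_0$ of $\Sigma := F(\overline{\Omega})$ as a full $n$-dimensional complex set, since $F$ could drop rank along a leaf. The clean fix is Sard's theorem: noncritical values of $F|_\Omega$ are of full measure in $\Sigma$ (and $F(\partial\Omega)$ is measure zero in $\Sigma$, being the image of a $2n$-dimensional manifold), so at a dense set of points $p \in \Sigma$ the map $F$ is a local diffeomorphism near some preimage in $\Omega$, the image of the corresponding leaf is an honest complex $n$-dimensional submanifold lying in $\Sigma$, and the Levi form of $\Sigma$ vanishes there; since the Levi form depends continuously on the point, it vanishes on all of the smooth-hypersurface locus. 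With that density-and-continuity remark inserted, the argument is complete.
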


In particular, we have $F(\overline{\Omega})
\subset \widehat{M}$, the holomorphic (polynomial) hull of $M$.

Theorem~\ref{thm:mainlocal} has another interesting consequence.  Certain
CR singular submanifolds arise as images of CR submanifolds under CR maps
that are diffeomorphisms onto their image.  Our theorem combined with results
of~\cite{LMSSZ} shows that nondegenerate holomorphically flat elliptic
real-analytic submanifolds are never such images when $n > 1$.
In particular, suppose $f \colon M \to \C^{n+1}$ is a
real-analytic CR map from a real-analytic CR manifold $M$
that is a diffeomorphism onto its image $f(M)$,
with $f(M) \subset \C^{n+1}$ CR singular at $p \in f(M)$.  In~\cite{LMSSZ}
it was proved 
there exists a real-analytic function $u$ on $f(M)$, vanishing
on all CR vectors tangent to $f(M)$, yet $u$ is not a restriction
of a holomorphic function.  If $n > 1$, then Theorem~\ref{thm:mainlocal} says
$f(M)$ cannot be nondegenerate, holomorphically flat, and elliptic at $p$.
If $n=1$, then every Bishop surface is locally a diffeomorphic image of $\R^2$,
which does not contradict Theorem~\ref{thm:mainlocal}, as not every
real-analytic function extends in this case.

The organization of this article is as follows.
In section~\ref{section:nondegflatelliptic} we introduce the basic
terminology.  
In section~\ref{section:topology} we recall from \cite{DTZ} that the
relevant domains are topologically spheres and discuss the topology of the
leaves.  In section~\ref{section:awayfromsing} we prove the extension
outside the CR singular points.  We then prove the extension for the
model case for polynomials in section~\ref{section:poly}.  Using the
polynomial extension we show the smooth case of Theorem~\ref{thm:mainlocal}
in section~\ref{section:smooth} and combining with the previous results we prove the
smooth case of Theorem~\ref{thm:mainglobal}.  In section~\ref{section:realanal}
we prove the real-analytic assertions of 
Theorems \ref{thm:mainlocal} and \ref{thm:mainglobal}.  Finally we briefly
discuss what happens in the case of an elliptic-like degenerate CR singularity in
section~\ref{section:degenerate}.

%%%%%%%%%%%%%%%%%%%%%%%%%%%%%%%%%%%%%%%%%%%%%%%%%%%%%%%%%%%%%%%%%%%%%%%%%%%%

\section{Nondegenerate holomorphically flat elliptic CR singularities}
\label{section:nondegflatelliptic}

Let us review some well-known results and fix the meaning of the
definitions.
We work in $\C^{n+1}$ and we assume $n \geq 1$.
Let $M \subset \C^{n+1}$ be a real submanifold,
and let $T_p^cM = J(T_pM) \cap T_pM$, where
$J$ is the complex structure on $\C^{n+1}$ (that is, multiplication by $i$).
We write $\C \otimes T_p^c M = T_p^{(1,0)} M \oplus T_p^{(0,1)} M$,
where $T_p^{(1,0)} M$ are the holomorphic and  $T_p^{(0,1)} M$ the
antiholomorphic vectors tangent to $M$ at $p$.
We say $M$ is \emph{CR} at $q \in M$ if the dimension of $T_p^cM$ (or $T_p^{(1,0)}
M$) is constant as $p$ varies near $q$. 
Denote by $M_{CR}$ the set of CR points of $M$.  We say $M$ is \emph{CR
singular} if
there exists a non-CR point.

A CR singular submanifold $M \subset \C^{n+1}$ of real
codimension 2 can locally near the singularity be given (after a rotation and translation) as
\begin{equation}
w =
Q(z,\bar{z})
+ E(z,\bar{z})
=
\sum_{j,k=1}^{n} \left( a_{jk} z_j \bar{z}_k
+
b_{jk} z_j z_k
+
c_{jk} \bar{z}_j \bar{z}_k
\right)
+
E(z,\bar{z}) ,
\end{equation}
where $E$ is $O(3)$, and $(z,w) \in \C^n \times \C$.
We make
$c_{jk} = \bar{b}_{jk}$ by absorbing some terms into $w$ via a biholomorphic
change of variables.  We choose $[b_{jk}]$ to be a symmetric matrix.

First by \emph{nondegenerate} we mean that $[a_{jk}]$ is a nonsingular
matrix.  Although we do not use this fact, we remark that the
matrix $[a_{jk}]$ is related to the Levi-form.  That is, the Levi-form of
$M$ at CR points is a perturbation of the $(n+1) \times (n+1)$ matrix
$[a_{jk}] \oplus [0]$ restricted to an $(n-1)$ dimensional subspace.

By \emph{holomorphically flat} we mean that $M$ is contained in a real-analytic Levi-flat
hypersurface, or equivalently there exists a holomorphic function near
$0$ with nonvanishing derivative that is real-valued on $M$.
We arrange this function to be $w$ and hence the hypersurface to be
given by $\Im w = 0$ (see \cite{DTZ}). 
Therefore when $M$ is holomorphically flat we assume 
$[a_{jk}]$ is a Hermitian matrix and $E(z,\bar{z})$ is
real-valued.

Finally a holomorphically flat nondegenerate $M$ is \emph{elliptic} if
the sets $\{ z : Q(z,\bar{z}) = \text{constant} \}$
are either empty or (real) ellipsoids.  Via basic linear algebra,
being elliptic implies $[a_{jk}]$ must be positive definite and
the eigenvalues of $[b_{jk}]$ are small compared to $[a_{jk}]$.
We will make their size precise below.
Equivalently for small enough real constants $c$,
the sets $M \cap \{ w = c \}$ are
either empty or compact smooth manifolds whose diameter goes to 0 as $c$ goes to 0.
When we say that a boundary of a domain $\Omega \subset \C^n \times \R$ has an
elliptic CR singularity we mean that $\Omega$ corresponds to 
$s > Q(z,\bar{z})+E(z,\bar{z})$.

It is a classical result in linear algebra that a positive definite
matrix $A$ and a symmetric matrix $B$ can be diagonalized using
$T^*AT$ and $T^tBT$ for a nonsingular matrix $T$ where $T^* =
\overline{T^t}$ is the conjugate transpose.
See e.g.\ \cite{HornJohnson}*{Theorem 7.6.6}.
Hence 
every nondegenerate holomorphically flat elliptic CR singularity can be put into the form
\begin{equation} \label{eq:typeofsing2}
w = \sum_{j=1}^{n} \bigl(\sabs{z_j}^2 + \lambda_j (z_j^2 + \bar{z}_j^2)\bigr) +
E(z,\bar{z}) .
\end{equation}
Clearly we can make $\lambda_j \geq 0$.  Furthermore the
eigenvalues must satisfy $\lambda_j < \frac{1}{2}$ if $E$ is to be elliptic.
The numbers $\lambda_j$ are called the \emph{Bishop invariants} of $M$.

Suppose $M$ is given by \eqref{eq:typeofsing2}.  We write $M^{quad}$
for the quadratic model.  That is, 
\begin{equation} \label{eq:Mquad}
M^{quad}:  w = \sum_{j=1}^{n} \bigl(\sabs{z_j}^2 + \lambda_j (z_j^2 +
\bar{z}_j^2)\bigr) .
\end{equation}

Let us give some examples to show the conditions above are natural
for the extension problem.

\begin{example}
The sphere from the introduction can locally be given by
\begin{equation}
w = \snorm{z}^2 . 
\end{equation}
Our results say every smooth function $f$ on $M$ that is CR ($n > 1$)
on $M_{CR}$ extends to a CR function on
\begin{equation}
\Re w \geq \snorm{z}^2 , \qquad \Im w = 0,
\end{equation}
smooth up to the boundary.  However there does not necessarily exist
a
holomorphic function of $(z,w)$ whose restriction is $f$.  For example 
a smooth non-real-analytic function depending only on $\Re w$ is
clearly CR, and does not extend to a holomorphic function.

Furthermore, take a holomorphic function $F$ on $\{ \Re w > \snorm{z}^2 \}$
smooth on $\{ \Re w \geq \snorm{z}^2 \}$ that does not extend
holomorphically past any point.
Letting $f = F|_M$ we obtain a smooth CR function that extends to $H$,
but does not extend holomorphically
through the CR singularity.  In this sense the nondegenerate elliptic
singularity is analogous to a strongly pseudoconvex point.
\end{example}

\begin{example}\label{eg:NonDegen}
Nondegeneracy is needed for smoothness up to the boundary.  For example,
let $M$ and $H$ be defined by
\begin{equation}
M : 
w = \snorm{z}^4 ,
\qquad
H : 
\Re w \geq \snorm{z}^4 , ~ \Im w = 0 .
\end{equation}
The function $f \colon M \to \C$ given by $f = \sqrt{\Re w}$ is clearly CR
on $M_{CR}$.  It clearly
extends along every leaf $\{ w = \text{constant} \}$.  The function $f$ is smooth on $M$
since
\begin{equation}
\sqrt{\Re w}= \sqrt{\snorm{z}^4} =
\snorm{z}^2 \quad \text{on } M.
\end{equation}
It is smooth in the
interior of $H$, and up to the boundary on $H \setminus \{ 0 \}$, but it
is not smooth at the origin; the normal
derivative blows up.  It is however continuous on $H$.
\end{example}

%%%%%%%%%%%%%%%%%%%%%%%%%%%%%%%%%%%%%%%%%%%%%%%%%%%%%%%%%%%%%%%%%%%%%%%%%%%%

\section{The topology}
\label{section:topology}

Before we prove the extension let us briefly discuss the topology of the
given domains.  The boundary of a smoothly bounded domain
$\Omega \subset \C^n \times \R$ has a CR singularity precisely when the
boundary is tangent to one of the leaves, that is, $\C^n \times \{ c \}$ for a
constant $c \in \R$.

The equations $Q(z,\bar{z}) = c$ define ellipsoids, which are
homeomorphic to spheres.
Therefore sufficiently near the origin the equations
$Q(z,\bar{z})+E(z,\bar{z}) =c$ define
bounded domains also homeomorphic to spheres.
The proof of the following proposition follows easily.

\begin{prop}
Suppose $M$ is given by
\begin{equation}
w = \sum_{j=1}^{n} \bigl( \sabs{z_j}^2 + \lambda_j (z_j^2 + \bar{z}_j^2)
\bigr) +
E(z,\bar{z}) ,
\end{equation}
where $0 \leq \lambda_j < \frac{1}{2}$, and
$E$ is $O(3)$, smooth, and real-valued.
Then there exist a $\delta_z > 0$ and a $\delta_w > 0$ such that
for any $0 < c < \delta_w$ the set
\begin{equation}
\{ (z,c)\in M : \snorm{z} < \delta_z \},
\end{equation}
is either empty or a connected compact real hypersurface homeomorphic to a
sphere, in particular bounding a
relatively compact domain with connected boundary.
\end{prop}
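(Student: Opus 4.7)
Writing $z_j = x_j + iy_j$ and identifying $\C^n \cong \R^{2n}$ via $v = (x_1,y_1,\dots,x_n,y_n)$,
\[
Q(z,\bar z) = \sum_{j=1}^n \bigl((1+2\lambda_j)x_j^2+(1-2\lambda_j)y_j^2\bigr)
\]
is a positive definite real quadratic form on $\R^{2n}$ because $0 \le \lambda_j < \tfrac12$. Consequently there are constants $0 < m \le M$ with $m\snorm{v}^2 \le Q(v) \le M\snorm{v}^2$, and $Q+E$ has a nondegenerate local minimum at $0$. The plan is to combine this with the cubic smallness of $E$ to arrange that every relevant level set is a smooth compact hypersurface bounding a star-shaped domain with respect to $0$.

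First I would pin down $\delta_z$ and $\delta_w$ so the level sets stay inside the ball and are regular. Since $E=O(\snorm{v}^3)$, on $\{\snorm{v}\le r\}$ for small enough $r>0$ we have $(Q+E)(v)\ge \tfrac{m}{2}\snorm{v}^2$; take $\delta_z:=r$ and $\delta_w:=\tfrac{m}{2}\delta_z^2$, so any $v$ with $(Q+E)(v)=c<\delta_w$ automatically satisfies $\snorm{v}<\delta_z$, making the level set a closed bounded subset of the open ball, hence compact and strictly interior. Moreover $\sabs{\nabla Q(v)}\ge c_0\snorm{v}$ since $\nabla Q$ is a linear isomorphism of $\R^{2n}$, while $\nabla E=O(\snorm{v}^2)$, so shrinking $\delta_z$ if necessary $\nabla(Q+E)$ vanishes only at the origin on $\overline{B_{\delta_z}}$ and every nonempty level set is a smooth real hypersurface.

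For the sphere topology I would use star-shapedness. Along the radial ray $t\mapsto tv$,
\[
\tfrac{d}{dt}(Q+E)(tv) = 2t\,Q(v) + \nabla E(tv)\cdot v \ge 2mt\snorm{v}^2 - Ct^2\snorm{v}^3,
\]
which is strictly positive for $v\ne 0$ once $t\snorm{v}$ is small. After a final shrinking of $\delta_z$, the map $t\mapsto (Q+E)(tv)$ is strictly increasing on $[0,1]$ for every $v\in\overline{B_{\delta_z}}\setminus\{0\}$, so the closed sublevel set $\{\snorm{v}\le \delta_z,\ (Q+E)(v)\le c\}$ is compact and star-shaped with respect to $0$ for every $c<\delta_w$. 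The radial map then gives a diffeomorphism from $S^{2n-1}$ onto its smooth boundary, delivering connectedness, the sphere topology, and the claim that the level set bounds a relatively compact domain with connected boundary. The only mildly delicate point is coordinating the shrinkings of $\delta_z$ so that properness, noncriticality, and star-shapedness all hold on the same closed sublevel sets; once positive definiteness of $Q$ is secured by the ellipticity hypothesis this is routine bookkeeping and no real obstacle arises.
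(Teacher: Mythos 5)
Your proof is correct and essentially fills in the details that the paper treats as obvious: the paper merely observes that $Q(z,\bar z)=c$ defines an ellipsoid because ellipticity forces $Q$ to be a positive definite real quadratic form, and that adding the $O(3)$ perturbation $E$ does not change the topology near the origin. Your star-shapedness argument (radial monotonicity of $(Q+E)(tv)$, properness via $(Q+E)\ge\tfrac{m}{2}\snorm{v}^2$, regularity via $\sabs{\nabla Q}\gtrsim\snorm{v}$ versus $\nabla E=O(\snorm{v}^2)$) is a clean, correct way to make that rigorous and matches the paper's intent.
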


Next we prove that bounded domains in $\C^n \times \R$ with
elliptic CR singular points are topologically spheres.
This proposition has been essentially proved in \cite{DTZ}.
Let $(z,s) \in \C^n \times \R$ be the coordinates.

\begin{prop} \label{prop:topology}
Suppose
$\Omega \subset \C^n \times \R$ is a bounded domain with smooth boundary.
Suppose all CR singularities of $\partial \Omega$ are nondegenerate
%, holomorphically flat,
and elliptic.  
Then
\begin{enumerate}[(i)]
\item $\partial \Omega$ is homeomorphic to a sphere,
\item $\partial \Omega$ has exactly two CR singular points,
\item for each $c \in \R$ the set
$\Omega \cap ( \C^n \times \{ c \} )$ is a smoothly bounded domain with
connected boundary.
\end{enumerate}
\end{prop}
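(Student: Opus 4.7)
My plan is to use the projection $s \colon \C^n \times \R \to \R$ restricted to $\partial \Omega$ as a Morse function. First I would identify the CR singularities of $\partial \Omega$ with the critical points of $s|_{\partial \Omega}$: a point $q$ is a CR singularity iff $T_q\partial\Omega$ coincides with the tangent space of the leaf $\C^n \times \{s(q)\}$, equivalently iff $ds$ vanishes on $T_q \partial \Omega$. At a nondegenerate elliptic CR singularity $p$, the normal form \eqref{eq:typeofsing2} parametrizes $\partial \Omega$ locally by $z$, giving $s|_{\partial \Omega} = \pm Q(z - z_0) + E(z - z_0) + s(p)$ with $Q$ positive definite and $E = O(3)$; the Hessian of $s|_{\partial \Omega}$ at $p$ is therefore definite, and $p$ is a nondegenerate critical point of Morse index $0$ or $2n$ according to whether $\Omega$ lies locally above or below $p$.

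Next I would apply Morse theory to each connected component $S$ of $\partial \Omega$. Because $s|_S$ has critical points only of index $0$ and $2n$, the associated cellular chain complex has $C_i = 0$ for $0 < i < 2n$, giving $H_0(S;\Z) = \Z^{m_0}$ and $H_{2n}(S;\Z) = \Z^{m_{2n}}$. But $S$ is a closed connected $2n$-manifold, so $H_0(S;\Z) = \Z$ and $H_{2n}(S;\Z) \in \{\Z, 0\}$; hence $m_0 = 1$, and since $s$ attains its maximum on compact $S$ one has $m_{2n} \geq 1$, forcing $m_{2n} = 1$. Reeb's theorem then identifies $S$ with $S^{2n}$ up to homeomorphism, carrying exactly one index-$0$ and one index-$2n$ CR singularity.

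The main obstacle is showing that $\partial \Omega$ has only one connected component, after which (i) and (ii) are immediate. Suppose for contradiction that $\partial \Omega$ has at least two components. Since $\Omega$ is bounded and connected, exactly one component bounds the unbounded component of $\R^{2n+1} \setminus \overline{\Omega}$, while any other component $S'$ bounds a bounded ``hole'' inside $\Omega$. Let $b$ be the unique index-$0$ critical point of $s|_{S'}$. Near $b$, $S'$ has local form $\{s = Q(z - z_0) + s(b) + O(3)\}$ with $Q$ positive definite, so the bounded hole enclosed by $S'$ lies locally in $\{s > Q(z-z_0) + s(b)\}$, forcing $\Omega$ locally into $\{s < Q(z-z_0) + s(b)\}$. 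Hence for $c$ slightly less than $s(b)$, the slice $\overline{\Omega} \cap \{s = c\}$ fills a full neighborhood of $z_0$ in its plane rather than collapsing to $b$; this contradicts the elliptic hypothesis at $b$, which in the normalization of Section~\ref{section:nondegflatelliptic} places $\Omega$ on the side whose slices $\overline{\Omega} \cap \{s = c\}$ shrink to the CR singular point. Hence $\partial \Omega$ is connected.

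Finally for (iii), let $c_{\min} < c_{\max}$ be the values of $s$ at the two CR singularities of the sphere $\partial \Omega$. For $c \in (c_{\min}, c_{\max})$, $c$ is a regular value of $s|_{\partial \Omega}$, and the standard handle description of $S^{2n}$ carrying a two-critical-point Morse function identifies $\partial \Omega \cap \{s = c\}$ with $S^{2n-1}$, in particular connected. The Jordan-Brouwer separation theorem applied inside the affine plane $\{s = c\} \cong \R^{2n}$ then forces $\Omega \cap \{s = c\}$ to equal the bounded component cut out by this $(2n-1)$-sphere, a topological open ball with connected boundary. For $c \notin (c_{\min}, c_{\max})$, $\Omega \cap \{s = c\}$ is empty and the claim is vacuous.
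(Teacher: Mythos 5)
Your proof is correct and rests on the same underlying idea as the paper's: identify the CR singularities of $\partial\Omega$ with the critical points of the height function $s$ restricted to $\partial\Omega$, note that nondegeneracy and ellipticity force these critical points to have definite Hessian (index $0$ or $2n$), and exploit the sign convention built into the definition of an elliptic CR singularity of $\partial\Omega$ (the domain must lie on the side $\{s > Q + E\}$, so the slices near a singularity shrink to a point) to rule out any extra ``anti-cap'' which would create a hole. The difference is one of packaging: the paper argues informally through the level sets $\partial\Omega \cap \{s = c\}$ (topology of slices constant away from caps, caps only begin or end components, boundedness and connectedness pin the count to one cap at each end), whereas you formalize the same picture by applying Morse homology and Reeb's theorem to each component of $\partial\Omega$, then a hole-and-Jordan--Brouwer contradiction to force connectedness, and finally the two-critical-point handle decomposition for part (iii). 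Your version is more careful about why the Morse argument forces exactly two critical points and about the separation-theoretic facts used implicitly in the paper, so it is a reasonable expansion of the published sketch rather than a different method.
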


The proof is easy in our case and we give a version here for the reader's convenience.

\begin{proof}
As $\Omega$ is bounded, there must be at least two CR singularities,
one for the infimum and one for the supremum of the $s$ that hit $\Omega$.
Let $(s_1,s_2)$ be the largest open interval such that for all
$c \in (s_1,s_2)$
the set $\bigl( \C^n \times \{ c \} \bigr) \cap \Omega \not= \emptyset$.

The CR singular points are exactly those points where a plane $\C^n \times \{
c\}$ is tangent to $\partial \Omega$.  Therefore
for an interval $I \subset (s_1,s_2)$ such that $\partial \Omega \cap \{ s
\in I \}$ contains no CR singular points, it is easy to see that
for all $c \in I$, the topology $\partial \Omega \cap \{ s = c \}$ is
identical.  All the CR singularities are ``caps''; they either begin or
end a component of $\partial \Omega \cap \{ s = c \}$.  Therefore
the entire subset $(s_1,s_2)$ must be such an interval $I$.  So CR
singularities are only allowed at $s_1$ and $s_2$ and as $\Omega$ is
bounded, we are only allowed one component so we are only allowed one CR
singularity at each end.  The existence of the homeomorphism follows.
\end{proof}

%%%%%%%%%%%%%%%%%%%%%%%%%%%%%%%%%%%%%%%%%%%%%%%%%%%%%%%%%%%%%%%%%%%%%%%%%%%%

\section{Away from CR singularities}
\label{section:awayfromsing}

Let us start with all the points of the hypersurface except the CR
singularity of the boundary.
First we look at the transversal derivative separately in the $n=1$ case.

It will be useful to use the following notation.
For sets $X \subset \C^n \times \R$ and 
$I \subset \R$, define
\begin{equation}
(X)_I := \{ (z,s) \in X : s \in I \}.
\end{equation}

\begin{lemma} \label{lem:SmExtnHar}
Let $\Omega \subset \C \times \R$ be a bounded domain with smooth boundary
and
let $I\subset \mathbb{R}$ be an open interval such that
$(\partial \Omega)_I$ contains no CR singularities of
$\partial\Omega$, and such that $(\partial \Omega)_{\{c\}}$ is
connected for all $c \in I$.  Let $(z=x+iy,s)$ be the coordinates.
Suppose $F \colon (\overline{\Omega})_I \to \C$
is a continuous function such that
$F \in C^\infty\bigl((\Omega)_I\bigr)$ and
$F \in C^\infty\bigl((\partial \Omega)_I\bigr)$,
and for all $c \in I$,
$F|_{(\Omega)_{\{c\}}}$ is harmonic.
Finally suppose
for all $j,k \geq 0$,
$\frac{\partial^{j+k}F}{\partial x^j \partial y^k}
\in C\bigl((\overline{\Omega})_I\bigr) \cap
C^\infty\bigl((\partial \Omega)_I\bigr)$.

Then the derivative $F_s \in C\bigl((\overline{\Omega})_I\bigr) \cap
C^\infty\bigl((\partial \Omega)_I\bigr)$.
\end{lemma}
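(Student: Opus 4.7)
The plan is to reduce, for each $s$, to the Dirichlet problem on the unit disk by means of a smoothly $s$-varying conformal map, and then differentiate the resulting Poisson integral in $s$. Since $(\partial \Omega)_I$ contains no CR singularities and each $(\partial \Omega)_{\{c\}}$ is connected, each slice $\Omega_c := (\Omega)_{\{c\}}$ is a simply connected, smoothly bounded planar domain whose boundary varies smoothly with $c \in I$. A parameterized Riemann mapping theorem combined with Kellogg's boundary regularity then yields conformal maps $\Phi_s \colon \overline{\D} \to \overline{\Omega_s}$ such that $\Phi(s,\zeta) := \Phi_s(\zeta) \in C^\infty(I \times \overline{\D})$ and $\Phi_s^{-1}$ is jointly smooth on $(\overline{\Omega})_I$.

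Set $\tilde F(s,\zeta) := F(\Phi_s(\zeta), s)$. Because $\Phi_s$ is holomorphic in $\zeta$ and $F(\cdot, s)$ is harmonic on $\Omega_s$, $\tilde F(s, \cdot)$ is harmonic on $\D$; the hypotheses make $\tilde F$ continuous on $I \times \overline{\D}$, and the boundary data $g(s,\theta) := \tilde F(s, e^{i\theta})$ is $C^\infty$ in $(s,\theta)$. By the Poisson integral formula
\begin{equation}
\tilde F(s, \zeta) = \frac{1}{2\pi}\int_0^{2\pi} \frac{1-\sabs{\zeta}^2}{\sabs{e^{i\theta}-\zeta}^2}\, g(s,\theta)\, d\theta,
\end{equation}
one may differentiate under the integral any number of times in $s$, and, using the standard up-to-the-boundary regularity for the disk's Dirichlet problem with smooth data, conclude $\tilde F \in C^\infty(I \times \overline{\D})$, with each $\partial_s^k \tilde F$ realized as the Poisson extension of $\partial_s^k g$.

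Since $F(z, s) = \tilde F(s, \Phi_s^{-1}(z))$, the chain rule then expresses $F_s$ as a sum of products of functions continuous on $(\overline{\Omega})_I$ and smooth on $(\partial \Omega)_I$, giving $F_s \in C\bigl((\overline{\Omega})_I\bigr) \cap C^\infty\bigl((\partial \Omega)_I\bigr)$ as required. The principal technical point is the smooth $s$-dependence of $\Phi_s$ up to $\partial \D$, a parameterized Kellogg-type statement; a hands-on alternative is to straighten $(\overline{\Omega})_I$ by a smooth diffeomorphism $I \times \overline{\D} \to (\overline{\Omega})_I$, reducing to a smoothly $s$-parameterized elliptic Dirichlet problem on $\overline{\D}$ to which classical up-to-the-boundary elliptic regularity applies.
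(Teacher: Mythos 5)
Your plan is correct in outline but takes a genuinely different, more high-powered route than the paper. The paper's proof never invokes the Riemann mapping theorem or elliptic regularity theory: it straightens the leaves by a (non-conformal) diffeomorphism $\Phi(x,y,\epsilon)$, observes that the pullback $G^\epsilon$ of a real part of $F$ has Laplacian $\nabla^2(G^\epsilon - G^0) = O(\epsilon)$ because all the $x,y$-derivative terms are multiplied by $O(\epsilon)$ coefficients, and then uses the maximum principle with explicit sub/superharmonic barriers $\pm C\epsilon\bigl(1-{(x/R)}^2-{(y/R)}^2\bigr)$ to control $G^\epsilon - G^0$ and the second difference $G^\epsilon + G^{-\epsilon} - 2G^0$; this yields boundedness of $G_s$ and $G_{ss}$, hence continuity of $F_s$, and a separate mean-value-theorem argument along a small ``box'' gives smoothness of $F_s$ on $(\partial\Omega)_I$. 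This is why the lemma carries the hypothesis that all $x,y$-derivatives of $F$ are continuous on $(\overline\Omega)_I$ — the barrier argument consumes it.

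Your route instead pulls everything back to the disk and reads off all $s$-derivatives from the Poisson kernel. If carried out, it proves strictly more (joint $C^\infty$ on $(\overline\Omega)_I$) from strictly fewer hypotheses (you never use the $x,y$-derivative assumption). That asymmetry is the tell-tale that you are leaning on heavier machinery, and the weight sits precisely where you flag it: you need either a parameterized Kellogg/Riemann mapping theorem — a smooth family of conformal maps $\Phi_s$ with $\Phi\in C^\infty(I\times\overline{\D})$ — or, in your ``hands-on alternative'', smooth parameter dependence for the Dirichlet solution operator of a family of variable-coefficient elliptic operators on $\overline{\D}$. Both are true and standard in spirit, but neither is a one-line citation, and proving either requires roughly as much work as the paper's barrier argument (and more background: Schauder estimates plus an implicit-function-theorem argument in H\"older spaces, or a careful treatment of boundary regularity of conformal maps under perturbation of the domain). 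You should also be a bit more careful at the step ``differentiate under the integral and conclude $\tilde F\in C^\infty(I\times\overline{\D})$'': differentiating the Poisson integral in $s$ only gives $\partial_s^k\tilde F = P[\partial_s^k g]$ in the interior; joint regularity up to $\partial\D$ needs boundedness of the Poisson operator on H\"older spaces $C^{m,\alpha}(S^1)\to C^{m,\alpha}(\overline{\D})$ (it fails on $C^m$), which you implicitly use but do not say. So: correct and arguably cleaner once the parameterized-mapping lemma is in hand, but that lemma is the whole ballgame, and the paper's elementary barrier argument was designed precisely to avoid it.
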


\begin{proof}
It is enough to prove the lemma for a small subinterval $J$ such that
$\overline{J} \subset I$.  We therefore assume that $F$ and its
$x$ and $y$ derivatives are continuous on $(\overline{\Omega})_{\overline{I}}$
and smooth on $(\partial \Omega)_{\overline{I}}$, both of which are compact
sets.

Let $G(x,y,s)$ be the real or imaginary part of $F$.
Fix some $s_0 \in I$.
Denote by $V \subset \R^2$ the domain $(\Omega)_{\{s_0\}}$ in $(x,y)$ coordinates.
Define $\lambda(x,y,\epsilon)$ and
$\omega(x,y,\epsilon)$
with $\lambda(x,y,0) = \omega(x,y,0) = 0$,
so that for small $\epsilon$ the map
\begin{equation}
\Phi(x,y,\epsilon) := \bigl(x+\lambda(x,y,\epsilon),y+\omega(x,y,\epsilon),s_0+\epsilon \bigr)
\end{equation}
diffeomorphically maps $\overline{V} \times [-\delta,\delta]$
to $(\overline{\Omega})_{[s_0-\delta,s_0+\delta]}$ for some small $\delta > 0$.
Note that $(\Omega)_{\{s_0\}}$ is taken to $(\Omega)_{\{s_0+\epsilon\}}$,
and $\Phi$ is the identity if $\epsilon = 0$.  The functions $\lambda$
and $\omega$ are smooth up to the boundary and are $O(\epsilon)$.

Let
\begin{equation}
G^\epsilon (x,y) := G\bigl(x+\lambda(x,y,\epsilon),
\, y+\omega(x,y,\epsilon),\, s_0 + \epsilon \bigr) .
\end{equation}
Notice $G^0(x,y) = G(x,y,s_0)$ is harmonic on $V$.
We compute
\begin{multline}
\nabla^2 (G^\epsilon - G^0) = 
(G_{xx} \circ \Phi)(2 \lambda_x + \lambda_x^2 + \lambda_y^2)
+ (G_{yy} \circ \Phi)(2 \omega_y + \omega_y^2 + \omega_x^2)
\\
+ (G_{xy} \circ \Phi)\bigl(2(1+\lambda_x)\omega_x + 2(1+\omega_y)\lambda_y\bigr)
\\
+ (G_x \circ \Phi)(\lambda_{xx}+\lambda_{yy})
+ (G_y \circ \Phi)(\omega_{xx}+\omega_{yy}) .
\end{multline}
All the $x$ and $y$ derivatives of $G$ are
continuous in $(x,y) \in \overline{V}$, and $\epsilon$,
and these are multiplied
by smooth functions of 
$(x,y) \in \overline{V}$ and $\epsilon$, which are $O(\epsilon)$.
That is, each term in the sum is of the form $A(x,y,\epsilon)
B(x,y,\epsilon)$ where $A$ is continuous and $B$ smooth and vanishing to
first order in $\epsilon$.
In particular,
\begin{equation}
\abs{\nabla^2 (G^\epsilon - G^0)} \leq C \epsilon .
\end{equation}
By putting sub and superharmonic functions above and below,
for example if $V$ is contained in a disc of radius $R$, then adding
$\pm \frac{1}{4} R^2 C \epsilon \bigl(1-{(x/R)}^2-{(y/R)}^2\bigr)$,
we obtain for any $(x_0,y_0) \in V$
\begin{equation}
\abs{G^{\epsilon}(x_0,y_0)-G^0(x_0,y_0)} \leq 
\sup_{(x,y)\in \partial V} \abs{G^{\epsilon}(x,y)-G^0(x,y)} + C'\epsilon 
\end{equation}
for some other constant $C'$.  Thus 
\begin{equation}
\sup_{(x,y)\in V} \abs{\frac{G^{\epsilon}-G^0}{\epsilon}} \leq 
\sup_{(x,y)\in \partial V} \abs{\frac{G^{\epsilon}-G^0}{\epsilon}} + C'
\leq
\sup_{(x,y)\in \partial V, t \in [-\epsilon,\epsilon]}
\abs{\frac{\partial}{\partial t} \Bigl[ (G \circ \Phi) (x,y,t) \Bigr] } + C' .
\end{equation}
As $G \circ \Phi$ is smooth on the compact set
$\partial V \times [-\epsilon,\epsilon]$,
we obtain $\frac{\partial}{\partial\epsilon}G^\epsilon$ and
therefore $G_s$ is bounded on $V$.  By applying the same argument
to $G_x$ and $G_y$ we obtain $G_{sx}$ and $G_{sy}$ are also bounded on
$V$.

Next let us consider $G_{ss}$.
The expression
$\nabla^2 (G^\epsilon+G^{-\epsilon} - 2G^0)$
has terms of the form
\begin{multline}
A(x,y,\epsilon) B(x,y,\epsilon)
+
A(x,y,-\epsilon) B(x,y,-\epsilon)
=
\\
\Bigl(A(x,y,\epsilon) - A(x,y,-\epsilon) \Bigr) B(x,y,\epsilon)
+
A(x,y,-\epsilon) \Bigl( B(x,y,\epsilon) + B(x,y,-\epsilon) \Bigr) .
\end{multline}
Let us work in $V$, where all derivatives make sense.
As $G_{xs}$, $G_{ys}$, $G_{xxs}$, $G_{xys}$, $G_{yys}$ are all bounded, the derivative $A_\epsilon$ is also bounded.
By expanding $A(x,y,\epsilon) - A(x,y,-\epsilon)$
up to first order (mean value theorem) we find
$\sabs{A(x,y,\epsilon) - A(x,y,-\epsilon)} \lesssim \epsilon$.
Since $B$ is smooth up to the boundary then $\sabs{B(x,y,\epsilon)} \lesssim \epsilon$.
So the product is $\lesssim \epsilon^2$.  As $A$ is continuous up to the
boundary and so
bounded, and $B$ is smooth up to the boundary, the right hand term is also
$\lesssim \epsilon^2$.  That is, there exists a constant $C$ such that for
all small $\epsilon$ and $(x,y) \in V$:
\begin{equation}
\abs{\nabla^2 (G^\epsilon+G^{-\epsilon} - 2G^0)} \leq C \epsilon^2 .
\end{equation}
As above
\begin{equation}
\sup_{(x,y) \in V} \abs{G^{\epsilon}+G^{-\epsilon}-2G^0} \leq 
\sup_{(x,y) \in \partial V} \abs{G^{\epsilon}+G^{-\epsilon}-2G^0} + C'\epsilon^2 .
\end{equation}
We divide by $\epsilon^2$ and obtain
that the second derivative in $\epsilon$ of $G^{\epsilon}$ is bounded.  As
all the other second and all first derivatives are bounded, then
$G_{ss}$ is bounded on $V$.  So $G_s$ extends continuously to
$\overline{V} \times [s_0-\delta,s_0+\delta]$, and therefore $F_s$ extends
continuously to $(\overline{\Omega})_I$.

To show that $F_s$ is smooth on the boundary,
we use the mean value theorem along the sides of a `box' obtained as follows;
start with $(z,s)\in(\partial \Omega)_I$, push it inside along a leaf by
$\approx\epsilon$, then move in the $s$ direction by $\approx\epsilon$, and
finally project back to $(\partial \Omega)_I$ along a leaf.

Suppose that $\rho$ is a defining function for $\partial \Omega$ near $(z,s)$.
We know 
$\rho_{z}(z,\bar{z},s)\neq 0$ and by a rotation in the $z$ plane we may suppose $\rho_{z}(z,\bar{z},s) > 0$.
Hence $(z-\epsilon,s)\in(\Omega)_I$ for small $\epsilon>0$.
Since $\rho\bigl(z-\epsilon,\bar{z}-\epsilon, s) \approx -\epsilon$ and $\rho_s$ is locally
bounded, there is a smooth real-valued function $\eta(\epsilon)$, such that
$\eta(\epsilon) =O(\epsilon)$ and
$\bigl(z-\epsilon,s+\eta(\epsilon)\bigr)\in(\Omega)_I$.
In addition, we choose $\eta$ so that $\eta(\epsilon)\to 0$ as $\epsilon\to 0$ and $\eta'(0)\neq 0$.
We also find a smooth real-valued function $\lambda(\epsilon)$ such that
$\bigl(z+\lambda(\epsilon), s+\eta(\epsilon)\bigr)\in(\partial\Omega)_I$ and $\lambda(\epsilon)\to 0$ as $\epsilon\to 0$.

For simplicity let us use $F(x,y,s)$ instead of $F(z,\bar{z},s)$.
Hence,
\begin{equation}
\begin{split}
F\bigl(x-\epsilon,y,s+\eta(\epsilon)\bigr) - F(x-\epsilon,y,s)
& =
 F\bigl(x-\epsilon,y,s+\eta(\epsilon)\bigr)-F\bigl(x+\lambda(\epsilon),y,s+\eta(\epsilon)\bigr)\\
 & \phantom{=} + F\bigl(x+\lambda(\epsilon),y,s+\eta(\epsilon)\bigr)- F(x,y,s)
\\
 & \phantom{=} + F(x,y,s)- F(x-\epsilon,y,s).
 \end{split}
\end{equation}

Now, we use the mean value theorem to get 
\begin{multline}
F_s(x-\epsilon,y,s+\eta_1)\eta(\epsilon) =  F_{x}\bigl(x-\epsilon_1,y,s+\eta(\epsilon)\bigr)\bigl(-\epsilon-\lambda(\epsilon)\bigr) \\
 +YF(z^{\epsilon},s^{\epsilon}) \gamma(\epsilon) +F_{x}(x-\epsilon_2,y,s)(\epsilon).
\end{multline}
where $\gamma$ is a smooth function with $\gamma(\epsilon)=O(\epsilon)$, $Y$
is locally a smooth vector field on $(\partial\Omega)_I$ near $(z,s)$, $(\partial\Omega)_I\ni (z^{\epsilon},s^{\epsilon})\to (z,s)$ as
$\epsilon\to 0$, $0\leq \eta_1\leq \eta(\epsilon)$, $0 \leq \sabs{\epsilon_1} \le \max\{\epsilon,\lambda(\epsilon)\}$, and
$0\le \epsilon_2 \le \epsilon$.
Dividing by $\epsilon$ and letting $\epsilon\to 0$,
as both $F_s$ and $F_x$ are continuous up to the boundary, we see
\begin{equation}
F_s(x,y,s) = \frac{1}{\eta'(0)}\bigl(\gamma'(0)YF (x,y,s)
-\lambda'(0)F_{x}(x,y,s)\bigr).
\end{equation}
Therefore, $F_s$ is smooth on the boundary.
\end{proof}

\begin{lemma} \label{lem:SmExtnCR}
Let $\Omega \subset \C^n \times \R$ be a bounded domain with smooth boundary.
Let $I\subset \mathbb{R}$ be an open interval. 
Suppose $(\partial \Omega)_I$ contains no CR singularities of
$\partial\Omega$, and each $(\partial \Omega)_{\{c\}}$ is connected for all
$c \in I$ and $f \colon (\partial \Omega)_I \to \C$
is a smooth function such that either
\begin{enumerate}[(i)]
\item $n > 1$ and $f$ is a CR function on ${(\partial \Omega)}_{I}$, or
\item $n = 1$ and for every $c \in I$ where $(\Omega)_{\{c\}}$
is nonempty, there exists a
continuous function on $(\overline{\Omega})_{\{c\}}$, holomorphic on
$(\Omega)_{\{c\}}$ extending $f|_{(\partial \Omega)_{\{c\}}}$.
\end{enumerate}
Then, there exists a smooth function $F \colon (\overline{\Omega})_I \to \C$,
CR on $(\Omega)_I$,  such that $F|_{(\partial \Omega)_{\{c\}}} = f$.

Furthermore if $\partial \Omega$ is real-analytic and $f$ is real-analytic,
then $F$ is real-analytic and it is a restriction of a function holomorphic
in a neighborhood of $(\overline{\Omega})_I$ in $\C^{n+1}$.
\end{lemma}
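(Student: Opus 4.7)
The plan is to construct $F$ one slice at a time and then upgrade to joint smoothness. For each $c \in I$ with $(\Omega)_{\{c\}}$ nonempty, the slice $(\overline{\Omega})_{\{c\}}$ is a bounded domain in $\C^n$ with smooth connected boundary $(\partial \Omega)_{\{c\}}$. When $n > 1$, the smooth Hartogs-Bochner theorem applied slicewise to the CR function $f(\cdot, c)$ yields a unique continuous extension $F(\cdot, c)$ that is holomorphic on $(\Omega)_{\{c\}}$; when $n = 1$, the hypothesis directly supplies this extension, and it is unique on each slice by the maximum principle. Thus $F$ is pointwise well defined on $(\overline{\Omega})_I$. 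The Levi foliation of $(\Omega)_I$ has these slices as its leaves, so once joint smoothness is established, holomorphy on each leaf is equivalent to the tangential CR equation on $(\Omega)_I$.

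For regularity in the $z$-directions, I would represent $F(\cdot, c)$ on each slice by the Bochner-Martinelli integral (for $n > 1$) or the Cauchy integral (for $n = 1$) of $f(\cdot, c)$. Parametrizing the family of slices by a smooth family of diffeomorphisms from a fixed model domain, exactly as in the construction used in Lemma~\ref{lem:SmExtnHar}, and pulling back, each such integral depends smoothly on the model coordinates. This gives $\partial_x^j \partial_y^k F \in C((\overline{\Omega})_I) \cap C^\infty((\partial \Omega)_I)$ for all $j, k \geq 0$, with the usual estimates on the Bochner-Martinelli or Cauchy kernel.

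The main obstacle is the transversal $s$-derivative, and Lemma~\ref{lem:SmExtnHar} was set up for exactly this situation. For $n = 1$, $\Re F$ and $\Im F$ are harmonic on each slice and meet the regularity assumptions of that lemma, which yields $F_s \in C((\overline{\Omega})_I) \cap C^\infty((\partial \Omega)_I)$. Since $\partial_s$ commutes with $\partial_{\bar z}$, the function $F_s$ is again holomorphic on each slice, and iterating Lemma~\ref{lem:SmExtnHar} produces $\partial_s^m F$ for every $m$; combining these with the $z$-derivatives gives all mixed partials. For $n > 1$, differentiating the pulled-back Bochner-Martinelli representation in $s$ only hits the smooth slice parametrization and the smooth $s$-dependence of the data, so standard kernel estimates give the analogous conclusion directly.

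For the real-analytic case, real-analytic $\partial\Omega$ and $f$ give a real-analytic family of slices together with real-analytic CR data on each slice, and a Bochner-Martinelli or Cauchy integral against such a family depends real-analytically on the parameter. Complexifying $s \mapsto s + i\tau$ turns this representation into a holomorphic function of $(z, w) \in \C^{n+1}$ defined in a neighborhood of $(\overline{\Omega})_I$; by the slicewise uniqueness of the extension, its restriction to $w = s \in \R$ coincides with $F$, giving the desired holomorphic extension to an open neighborhood in $\C^{n+1}$.
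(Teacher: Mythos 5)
Your slicewise construction and the change-of-variables trick to a fixed model domain $V$ match the paper, as does using Lemma~\ref{lem:SmExtnHar} for the transversal derivative when $n=1$, and the complexification of $s$ in the real-analytic case. But there is a genuine gap in your treatment of the $s$-derivative for $n>1$.

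You claim that differentiating the pulled-back Bochner--Martinelli representation in $s$ gives the boundary regularity of $F_s$ ``directly by standard kernel estimates.'' That does not work: the Bochner--Martinelli kernel is singular as $z$ approaches $\partial V$, and the integral of smooth boundary data against it is \emph{not} in general regular up to the boundary --- the boundary regularity of the extension relies essentially on the CR condition on the data (this is why the paper cites \cite{Kytmanov}*{Theorem 7.1} even for fixed-slice regularity). When you differentiate under the integral sign in $s$, the resulting integrand is not the Bochner--Martinelli kernel applied to CR data, so there is no ``standard estimate'' that carries boundary regularity through. The pullback argument in the paper only yields smoothness of $F$ on the open set $(\Omega)_I$; boundary regularity requires a different idea.

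The idea you are missing is the commutator argument. The paper introduces a globally defined tangent vector field $Y$ on $(\partial\Omega)_I$ transverse to the leaves, with $\frac{\partial}{\partial s}$ component equal to $1$, and shows that $[X,Y]f=0$ for every CR vector field $X$ tangent to the leaves. The key observation is that the commutator $[X,Y]$ has no $\frac{\partial}{\partial s}$ component and hence differentiates only along the leaves, where one already knows $F$ is holomorphic; so $[X,Y]F=0$ there, forcing $[X,Y]f=0$ on the boundary. This shows $Yf$ is again a smooth CR function on $(\partial\Omega)_I$, so it extends by the continuity step already established. Identifying this extension with $F_s$ is then done by slicing with complex lines to reduce to $n=1$, where Lemma~\ref{lem:SmExtnHar} applies. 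Without this step you have no control over $F_s$ at the boundary, and the iterative bootstrap --- replacing $f$ by $F_{z_k}\vert_{\partial\Omega}$ and $F_s\vert_{\partial\Omega}$ --- cannot get started. You also gloss over the continuity-up-to-the-boundary step for $F$ itself and its $z$-derivatives (the paper bounds $F_{z_k}$ in terms of tangential derivatives of $f$ and uses a ``bad tangent direction'' vector field to see smoothness of $F_{z_k}$ on the boundary), but that part of your sketch is at least plausible; the $s$-derivative for $n>1$ is the real missing piece.
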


\begin{proof}
For $n > 1$, for each $c \in I$, using the standard Hartogs-Bochner
phenomenon, $f$ has a holomorphic extension to $(\Omega)_{\{c\}}$ given by
the Bochner-Martinelli integral, let us denote it by $F(z,c)$.  For $n=1$
we are given the extension.
In addition, this extension is smooth on
$(\overline{\Omega})_{\{c\}}$ (see \cite{Kytmanov}*{Theorem 7.1}).
So, $F$ is well-defined on $(\overline{\Omega})_I$, holomorphic along leaves
and so CR on $(\Omega)_I$, and $F\vert_{(\partial\Omega)_I}=f$.

For any $(z,s) \in (\Omega)_I$,
\begin{equation}
F(z,s) = \smashoperator[r]{\int\limits_{(\partial \Omega)_{\{s\}}}}
f(\zeta,\bar{\zeta},s)U(\zeta,\bar{\zeta},z,\bar{z}),
\end{equation}
where $U$ is the Bochner-Martinelli kernel.
Because $(\partial \Omega)_I$ contains no CR singular points, 
$(\partial \Omega)_I$ intersects the leaves $\{ s = c \}$
transversally.
Therefore there exists a small interval $J$ with $\overline{J} \subset I$
and $c \in J$, a bounded domain $V \subset \C^n$ with smooth boundary, and a
smooth diffeomorphism $\Phi \colon \overline{V} \times J \to
(\overline{\Omega})_J$ with $\overline{V} \times \{s\}$ going to
$(\overline{\Omega})_{\{s\}}$.  Without loss of generality we assume $I= J$,
in particular we assume $F$ is smooth on the compact set $(\partial
\Omega)_{\overline{I}}$.

We change the integral to be over the fixed domain $V$ and for
each fixed $s$ use $\Phi(\cdot,s)$ as a change of variables.  As the kernel
$U$ is smooth, $F(z,s)$ is smooth as a function on
$(\Omega)_I$.

Let us first consider the real-analytic case.
If $\partial \Omega$ is real-analytic, then $\Phi$ is
real-analytic.  So, as $U$ is real-analytic, $F(z,s)$ is also real-analytic.
Hence $F$ extends as a holomorphic function to a neighborhood of
$(\Omega)_I$ in $\C^{n+1}$, that is, replacing $s$ with a complex variable.  A real-analytic CR function on a
generic real-analytic CR submanifold $(\partial \Omega)_I$ has a unique
holomorphic extension to a neighborhood of $(\partial \Omega)_I$.  Putting the
two facts together we obtain $F$ extends to a holomorphic
function on a neighborhood of $(\overline{\Omega})_I$ in $\C^{n+1}$.  We are done in the real-analytic case,
so let us drop this requirement.

To show $F$ is smooth on $(\overline{\Omega})_I$, we use an iterative
approach.  We claim the following:
\begin{enumerate}[\quad (a)]
\item $F$ is continuous on $(\overline{\Omega})_I$, and
\item $F_{z_k}$, for any $1\leq k\leq n$, and $F_s$ are smooth on $(\partial \Omega)_I$.
\end{enumerate}
If the claims are proved, 
we may replace $f$ with $F_{z_k}\vert_{(\partial\Omega)_I}$ and
$F_{s}\vert_{(\partial\Omega)_I}$.  Hence, $F\in
C^1\bigl((\overline{\Omega})_I\bigr)$ and repeating the procedure gives $F\in
C^\infty\bigl((\overline{\Omega})_I\bigr)$.

The derivatives $F_{z_k}$ are uniformly bounded in $(\overline{\Omega})_I$ since they are
smooth up to the boundary on each leaf and since the normal derivative can be
bounded by tangential derivatives for holomorphic functions, we bound
$F_{z_k}$ by tangential derivatives of $f$. More precisely,
\begin{equation}\label{eq:zDerBd}
\begin{split}
\abs{F_{z_k} (z,s)} &\leq \sup\limits_{(\zeta,s)\in (\partial \Omega)_{\{s\}}} \abs{F_{z_k}(\zeta,s)}\\
&\le C \sup \left\{\abs{Xf(p)} \,:\, p\in (\partial
\Omega)_{\overline{I}}, X\in T_p\big((\partial \Omega)_{\overline{I}}\big),
\norm{X} = 1\right\} < \infty.
\end{split}
\end{equation}
To prove $F$ is continuous we must show $F$ is continuous at the boundary
points. Let $(z_0,s_0)\in(\partial\Omega)_I$. For
$(z,s)\in(\overline{\Omega})_I$ sufficiently close to $(z_0,s_0)$, we have
\begin{equation}\label{eq:ContF}
\begin{split}
\abs{F(z_0,s_0)-F(z,s)} & \leq
\abs{f(z_0,\bar{z}_0,s_0)-f(z',\bar{z}',s)}+\abs{f(z',\bar{z}',s)-F(z,s)} \\
& \leq \abs{f(z_0,\bar{z}_0,s_0)-f(z',\bar{z}',s)}+ M\snorm{z'-z}
\end{split}
\end{equation}
where $(z',s)$ is the normal projection of $(z,s)$ to the boundary along the
leaf $(\overline{\Omega})_{\{s\}}$, and
$M$ is the supremum of the $F_{z_k}$'s on $(\overline{\Omega})_{I}$.
We conclude $F$ is continuous at $(z_0,s_0)$ since $f$ is continuous on
$(\partial\Omega)_I$ and $(z',s)\to (z_0,s_0)$ as $(z,s)\to (z_0,s_0)$.

For $1\leq k\leq n$, $F_{z_k}$ extends to $(\partial\Omega)_I$ since it is
smooth up to the boundary when restricted to each leaf by Bochner-Martinelli.
To see these are smooth on $(\partial\Omega)_I$ we let $U$ be a
neighborhood of $(z,s)\in (\partial\Omega)_I$.
Since $(\partial\Omega)_I$ has no CR singularities, there are
smooth vector
fields $X_1,\ldots,X_{n-1}$ such that $T^{(1,0)}\bigl((\partial\Omega)_I\cap
U\bigr) =
\operatorname{span}\{X_1,\ldots,X_{n-1}\}$.  For $n=1$ we do not need these
vector fields.
As the leaves intersect $\partial\Omega$ transversally there is a
$1\leq k_0\leq n$ such that $\frac{\partial}{\partial z_{k_0}}$ is not tangent to
$\partial\Omega$ near $(z,s)$.
Without loss of generality suppose $k_0=1$, i.e.,
$\rho_{z_1}(z,\bar{z},s)\ne 0$ where $\rho\colon\C^n\times\R\to\R$ is a
defining function for $\Omega$. The `bad tangent direction' is given locally
by the vector field
\begin{equation}
X=\frac{\partial}{\partial z_1} - \frac{\rho_{z_1}}{\rho_{\bar{z}_1}}\cdot\frac{\partial}{\partial\bar{z}_1}.
\end{equation}
As $X$ differentiates along the leaves, we can differentiate $F$, which is
smooth up to the boundary along leaves, instead of $f$.
Notice $F_{z_1}\vert_{(\partial\Omega)_I}=XF\vert_{(\partial\Omega)_I} = Xf$
and $X_kF\vert_{(\partial\Omega)_I} = X_kf$ for $1\leq k\leq n-1$.
So $F_{z_1}$, $X_1F, \ldots, X_{n-1}F$ are smooth on $(\partial\Omega)_I$.
Making $U$ smaller if necessary, we have $\{\frac{\partial}{\partial z_1}$,
$X_1$, $\ldots$, $X_{n-1}\}$ is a smooth coordinate frame in each leaf in
$U$ and hence we get $F_{z_k}$ is smooth on $(\partial\Omega)_I$ for
$1\leq k\leq n$.

We now iterate.  The function $F_{z_k}$ is smooth on the boundary
$(\partial\Omega)_I$ and satisfies all the hypotheses.  We obtain
$F_{z_k}$ is continuous, and in fact all $z$
derivatives of $F$ of all orders are continuous up to the boundary.

In the $n=1$ case, we now have that $F$ satisfies the hypotheses of 
Lemma~\ref{lem:SmExtnHar} and so $F_s$ is smooth on the boundary.
Therefore $F_s$ also satisfies the hypotheses of the present lemma,
and we iterate as mentioned above to obtain $F$ is smooth.
Let us now suppose $n > 1$.

Define the vector field
\begin{equation}
Y = 
\frac{\partial}{\partial s} -
\sum_{k=1}^n
\left(\frac{\rho_s\rho_{z_k}}{\sum_j \rho_{z_j}\rho_{\bar{z}_j}}\right)
\frac{\partial}{\partial \bar{z}_k} .
\end{equation}
Because there are no CR singularities the denominator does not vanish and
$Y$ is a smooth vector field
defined globally on $(\partial \Omega)_I$.  Let $X$ be a CR vector field
defined near some point of $(\partial \Omega)_I$.  In particular $X$ has no
$\frac{\partial}{\partial s}$ component.  As $f$ is a CR function,
$Xf$ vanishes.  We look at the commutator $[X,Y]f$.  We do not claim this
commutator always vanishes, but we claim it vanishes for CR functions.
The commutator clearly does not contain a
$\frac{\partial}{\partial s}$ component as $X$ does not and the
corresponding coefficient in $Y$ is constant.  Therefore $[X,Y]$ is a
vector field which only differentiates along directions
tangent to the leaves $\{ s =
\text{constant} \}$.  Along the leaves, $F$ is smooth up to the
boundary and equals $f$.  Along each leaf, $F$ is also holomorphic and as
$[X,Y]$ only differentiates in the $\bar{z}$ directions, for any
fixed $s$, $[X,Y]F = 0$, and therefore $[X,Y]f = 0$.  In
particular, $Yf$ is a smooth CR function on $(\partial \Omega)_I$.
Therefore, by what we proved so far, $Yf$ extends to a continuous
function $G$ on $(\overline{\Omega})_I$, which is holomorphic along the leaves
and smooth up to the boundary along the leaves.  All the $z$ derivatives
of $G$ of all orders are also continuous up to the boundary.

Now suppose near some point $\rho_{\bar{z}_1} \not= 0$.  Take
\begin{equation}
\widetilde{Y} = 
\frac{\partial}{\partial s} -
\left( \frac{\rho_s}{\rho_{\bar{z}_1}}\right)
\frac{\partial}{\partial \bar{z}_1} .
\end{equation}
The vector field $(Y-\widetilde{Y})$ again only points
along the leaves and therefore we fix $s$ and compute
$(Y-\widetilde{Y})f = 0$ using $F$.
So $\widetilde{Y}f = Yf$ wherever $\widetilde{Y}$ is defined.

Let $s_0 \in I$ be fixed.  As $(\Omega)_{\{s_0\}}$ is smoothly bounded,
there exists a complex line in $\{ s = s_0 \}$ that is
outside of $(\Omega)_{\{s_0\}}$, and touches
$(\overline{\Omega})_{\{s_0\}}$ at a single point.
Write $z = (z_1,z')$.  
Without loss of generality suppose this line is $\{ z' = 0, s=s_0\}$.
For $z_0'$ near zero and $s$ near $s_0$, the intersection
$(\Omega)_{\{s\}} \cap \{ z' = z_0' \}$
is either a single point, empty, or
a bounded domain (in $\C$) with smooth boundary.
Let $J$ be a small interval
around $s_0$.  Suppose also that $J$ is small enough and a fixed
$z_0'$ is such that we obtain
$(\Omega)_{\{s\}} \cap \{ z' = z_0' \}$ are
bounded domains with smooth boundary for all $s \in J$.
In particular we are in the setup for $n=1$.  We know 
$\frac{\partial}{\partial z_1}$ is not tangent to the boundary of
$(\Omega)_{J} \cap \{ z' = z_0' \}$.  On this fixed slice,
we apply the $n=1$ result, so along this slice $F_s$ is smooth up
to the boundary.  Note $\widetilde{Y}$ points along this slice and
so $F_s$ on the boundary is equal to $\widetilde{Y}f$, which is equal to
$Yf$.  As we can repeat this argument for a small open set of $z_0'$ we
obtain for the $s \in J$, $G$ is equal to $F_s$ on an open set, and
therefore by the identity theorem $G = F_s$.  Thus $F_s$ extends
continuously to the boundary and is smooth on the boundary.

We now have $F_{z_k}$ and $F_s$ satisfy the hypotheses of the lemma,
that is we have proved claim (b), and we iterate to obtain $F$
is $C^\infty\bigl((\overline{\Omega})_I\bigr)$.
\end{proof}

%%%%%%%%%%%%%%%%%%%%%%%%%%%%%%%%%%%%%%%%%%%%%%%%%%%%%%%%%%%%%%%%%%%%%%%%%%%%

\section{Polynomial case}
\label{section:poly}

Let us prove the polynomial case first; we wish to extend
polynomials in the model case where $E = 0$.
We begin with $n=1$.

\begin{lemma} \label{lem:PolyExtn1}
Suppose $M, H \subset \C^2$ are defined by
\begin{equation} 
M: w=\sabs{z}^2 + \lambda (z^2 + \bar{z}^2) ,\qquad
H:  \begin{cases}
\Re w \geq \sabs{z}^2 + \lambda (z^2 + \bar{z}^2) ,\\
\Im w = 0.
\end{cases}
\end{equation}
where $0 \leq \lambda < \frac{1}{2}$. 
Suppose $f \colon M \to \C$ is a function such that if
$M$ is parametrized by $z$, $f(z,\bar{z})$ is a polynomial.
Further suppose 
for every $c > 0$, there exists a 
continuous function on $H \cap \{ w = c \}$, holomorphic on $(H \setminus M)
\cap \{ w = c \}$ extending $f|_{M \cap \{ w = c \}}$.

Then there exists a holomorphic polynomial $P(z,w)$
such that $P|_M = f$.
Furthermore, if $f$ is homogeneous of degree $d$, then 
$P$ is of weighted degree $d$, that is 
\begin{equation}
P(z,w) = \sum\limits_{j + 2k = d}\, c_{j,k}\,z^j\, w^k.
\end{equation}
\end{lemma}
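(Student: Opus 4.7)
The plan is to reduce $f(z, \bar z)$ modulo the defining equation of $M$ to a canonical normal form, and then use the extension hypothesis to kill the ``non-holomorphic'' part of that normal form.

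Assume first $\lambda > 0$. The defining relation of $M$ rewrites as $\lambda \bar z^2 = w - z\bar z - \lambda z^2$, so iteratively reducing higher powers of $\bar z$ expresses $f|_M$ uniquely as $A(z, w) + B(z, w)\,\bar z$ with $A, B \in \C[z, w]$ (uniqueness follows by a parity-of-$\bar z$-degree argument after substituting $w = Q(z,\bar z)$). With the weighting $\deg z = \deg \bar z = 1$ and $\deg w = 2$, both sides of the reduction have weight $2$, so the reduction preserves weighted degree. Hence if $f$ is $(z,\bar z)$-homogeneous of degree $d$, then $A$ is weighted-homogeneous of degree $d$ and $B$ of degree $d-1$.

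The main step is to show $B \equiv 0$. Fix $c > 0$ and solve the quadratic $\lambda \bar z^2 + z\bar z + \lambda z^2 - c = 0$ for $\bar z$ on the ellipse $M \cap \{w = c\}$ to obtain $\bar z = (-z + R(z))/(2\lambda)$, where $R(z) = \sqrt{(1 - 4\lambda^2) z^2 + 4\lambda c}$; the branch is pinned by requiring $\bar z = z$ at positive real points. A short check using $\lambda < \tfrac{1}{2}$ shows the two branch points $z = \pm 2 i \sqrt{\lambda c/(1-4\lambda^2)}$ of $R$ lie strictly inside the ellipse. Substituting into the normal form makes $f|_{M \cap \{w = c\}}$ a polynomial in $z$ plus $B(z, c)\,R(z)/(2\lambda)$, and the extension hypothesis demands this whole sum extend holomorphically to the interior of the ellipse. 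Because $R$ jumps sign across any branch cut joining the two branch points while a polynomial cannot, the jump of $B(z, c)\,R(z)$ must vanish, forcing $B(z, c)$ to vanish on that segment; since a polynomial in $z$ vanishing on a line segment is zero, $B(z, c) \equiv 0$. This holds for every $c > 0$, so $B \equiv 0$ as a polynomial in $(z, w)$.

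The case $\lambda = 0$ is simpler: from $z\bar z = w$ we reduce $f|_M$ to $A(z, w) + \bar z\, C(\bar z, w)$, and on the circle $\bar z = c/z$ the second summand contributes only negative powers of $z$, which do not extend to a polynomial on the disc, forcing $C \equiv 0$. In either case, setting $P(z, w) := A(z, w)$ gives the desired polynomial extension, and the weighted-degree preservation from the reduction immediately yields $P(z, w) = \sum_{j + 2k = d} c_{j,k}\, z^j w^k$ in the homogeneous case. The main obstacle is the branch-cut analysis in the $\lambda > 0$ case; the inequality $\lambda < \tfrac{1}{2}$ enters exactly there, to ensure the branch points of $R$ actually lie inside the ellipse.
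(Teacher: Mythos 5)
Your proof is correct, and it takes a genuinely different route from the paper's. The paper first invokes the classical L\'am\'e--Fischer result that the Dirichlet problem on an ellipse with polynomial data has a polynomial solution, combines it with uniqueness of the Dirichlet problem to get a holomorphic polynomial $P_c(z)$ matching $f$ on each level set $\rho = c$, then (for $\lambda > 0$) shows $f$ is invariant under the involution $(z,\bar z)\mapsto(z,-z/\lambda - \bar z)$ fixing $\rho$, and concludes via Noether's theorem that $f$ lies in the algebra generated by $z$ and $\rho$. You instead reduce $f$ algebraically modulo the defining relation of $M$ to a normal form $A(z,w) + B(z,w)\bar z$, and kill $B$ by a Schwarz-function argument: on each ellipse the term $B(z,c)R(z)$ would have to extend holomorphically through the interior, but $R$ has two branch points at the foci, which lie strictly inside exactly because $\lambda < \tfrac{1}{2}$, so continuity of the extension across a branch cut forces $B(\cdot,c)$ to vanish on that segment and hence identically. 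Your approach is more self-contained and complex-analytic, trading L\'am\'e--Fischer and Noether for the elementary geometry of the Schwarz function; it also makes the role of the ellipticity condition $\lambda < \tfrac{1}{2}$ very concrete (foci inside the ellipse), whereas in the paper that condition enters implicitly through the geometry of the level sets. One small remark: the uniqueness of the decomposition $A + B\bar z$ (which you establish via the parity argument) is not actually needed for the proof — a single decomposition with $B\equiv 0$ suffices — though it does no harm and cleanly justifies the weighted-degree count in the homogeneous case.
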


\begin{proof} 
Let $\rho(z,\bar{z}) = \sabs{z}^2 + \lambda (z^2 + \bar{z}^2)$.
Write $f(z,\bar{z})$ for the value of $f$ on $M$ at
$\bigl(z,\rho(z,\bar{z})\bigr)$.

By a classical result (probably due to Lam\'e) the Dirichlet
problem on an ellipse with polynomial data has a polynomial solution.
For a wonderful exposition and a beautiful short proof (probably due to
Fischer), see \cite{KhavinsonLundberg}.  Therefore, for every $c > 0$,
there exists a polynomial $P_c(z)$ such that $P_c(z) = f(z,\bar{z})$
when $\rho(z,\bar{z}) = c$.
We need to show the dependence on $c$ is also
polynomial.
We will treat $z$ and $\bar{z}$ as separate variables from now on.

First suppose $\lambda = 0$.  Consider the rational function
$P(z,w) = f(z,\frac{w}{z})$.  For all fixed $c > 0$ we get 
$P_c(z) = P(z,c)$ on a circle and hence everywhere
for $z \not= 0$.
If $f(z,\bar{z}) = \sum a_{jk} z^j \bar{z}^k$, then
\begin{equation}
P_c(z) = f\Bigl(z,\frac{c}{z}\Bigr) = \sum a_{jk} z^{j-k} c^k .
\end{equation}
This can only be a polynomial for all $c > 0$ if $a_{jk} = 0$ whenever $j <
k$.  Therefore $P(z,w)$ is the polynomial we are looking for.

Now suppose $0 < \lambda < \frac{1}{2}$.
The map
\begin{equation}
(z,\bar{z}) \overset{\Phi}{\mapsto} \left(z,-\frac{1}{\lambda}z-\bar{z}\right)
\end{equation}
is an involution fixing
$\rho(z,\bar{z}) = z \bar{z} + \lambda (z^2 + \bar{z}^2)$.

Take $z \not= 0$ and let $c$ be such that $\rho(z,\bar{z}) = c$.  We have
$P_c(z) = f(z,\bar{z})$ when $\rho(z,\bar{z}) = c$, and 
as everything is a polynomial in $z$ and $\bar{z}$ we obtain
\begin{equation}
P_c(z) - f(z,\bar{z}) = q(z,\bar{z}) \bigl(\rho(z,\bar{z}) - c\bigr) ,
\end{equation}
for some polynomial $q$.  In particular applying the involution we get
\begin{equation}
P_c(z) - f(z,-z/\lambda-\bar{z}) = q(z,-z/\lambda-\bar{z}) \bigl(\rho(z,\bar{z}) - c\bigr) ,
\end{equation}
and so $f$ is invariant under the involution.

%A well-known theorem of
%Noether (see e.g.\ \cite{Sturmfels}*{Theorem 2.1.4})
%says that to generate the algebra of polynomials invariant under
%$\Phi$, that is a finite group of order 2 generated by $\Phi \in GL(\C^{2n})$, we take all monomials $m(z,\bar{z})$
%up to degree 2 and compute $m + m \circ \Phi$.
A well-known theorem of
Noether (see e.g.\ \cite{Sturmfels}*{Theorem 2.1.4})
says that the algebra of polynomials invariant under
$\Phi$ is generated by
$m + m \circ \Phi$, where $m(z,\bar{z})$ varies over monomials of degree at most $2$.
A short
calculation shows this algebra is generated by $z$ and
$\sabs{z}^2 + \lambda (z^2 + \bar{z}^2)$.  We obtain our polynomial
$P$.

The claim about degree follows at once.
\end{proof}

The following proposition is surely classical and is useful for
extending the one dimensional result to higher dimensions.

\begin{prop} \label{prop:formalpoly}
Suppose $F(z) = F(z_1,\ldots,z_n)$ is a formal power series in $n$
variables.  Let $\epsilon S^{2n-1} \subset \C^n$ be the sphere of radius
$\epsilon$ and suppose $U \subset \epsilon S^{2n-1}$ is an open subset.
If for every $z \in U$
\begin{equation}
P_z(t) = F(tz)
\end{equation}
is a polynomial, then $F$ is a polynomial.  Furthermore,
\begin{equation}
\deg F = \max_{z \in U} \deg P_z .
\end{equation}
\end{prop}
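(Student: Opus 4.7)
The plan is to decompose $F$ into its homogeneous parts and apply a Baire category argument on $U$. Write $F = \sum_{d=0}^\infty F_d$, where $F_d$ is the homogeneous polynomial part of degree $d$. Then substituting $tz$ gives the formal identity
\begin{equation}
P_z(t) = F(tz) = \sum_{d=0}^\infty F_d(z)\, t^d,
\end{equation}
so the hypothesis that $P_z$ is a polynomial for every $z \in U$ is equivalent to saying that for each $z \in U$ there is some $N = N(z)$ with $F_d(z) = 0$ for all $d > N$.

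Next, for each integer $N \geq 0$ set
\begin{equation}
V_N = \bigcap_{d > N} \{ z \in U : F_d(z) = 0 \}.
\end{equation}
Each $V_N$ is closed in $U$ (intersection of zero sets of continuous functions), the $V_N$ are increasing, and $U = \bigcup_N V_N$ by the previous paragraph. Since $U$ is a nonempty open subset of the sphere $\epsilon S^{2n-1}$, it is a Baire space, so some $V_{N_0}$ has nonempty relative interior. Thus there is a nonempty open $W \subset U$ and an integer $N_0$ such that $F_d \equiv 0$ on $W$ for every $d > N_0$.

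The main step is then to upgrade this vanishing on a piece of sphere to vanishing on all of $\C^n$. For this, exploit homogeneity: if $F_d$ vanishes on $W \subset \epsilon S^{2n-1}$, then $F_d(\mu z) = \mu^d F_d(z) = 0$ for every $\mu \in \C^*$ and $z \in W$. The set $\{\mu z : \mu \in \C^*, z \in W\}$ is open in $\C^n \setminus \{0\}$ (the map $(\mu, z) \mapsto \mu z$ from $\C^* \times \epsilon S^{2n-1}$ to $\C^n \setminus \{0\}$ is an open surjection), so $F_d$ is a holomorphic polynomial vanishing on a nonempty open subset of $\C^n$, hence $F_d \equiv 0$ for every $d > N_0$. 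Therefore $F = \sum_{d \leq N_0} F_d$ is a polynomial.

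For the degree statement, let $D = \deg F$, so $F_D \not\equiv 0$ and $F_d \equiv 0$ for $d > D$. Then clearly $\deg P_z \leq D$ for every $z \in U$. Conversely, by the same homogeneity/holomorphicity argument in reverse, $F_D$ cannot vanish on any nonempty open subset of $\epsilon S^{2n-1}$, so there is some $z \in U$ with $F_D(z) \neq 0$, giving $\deg P_z = D$. Hence $\max_{z \in U} \deg P_z = D = \deg F$. The only real obstacle is the Baire-category step, which requires recognizing that the pointwise polynomial bound $N(z)$ need not be uniform on $U$ a priori, but becomes locally uniform on a suitable open subset.
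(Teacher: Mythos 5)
Your proof is correct and follows the same basic strategy as the paper --- decompose $F$ into homogeneous parts $F_d$ and run a Baire category argument on $U$ --- but the Baire step is packaged differently. You use the increasing-union-of-closed-sets form: cover $U = \bigcup_N V_N$, extract an open $W$ and a uniform $N_0$ with $F_d|_W \equiv 0$ for $d > N_0$, and then invoke homogeneity plus the identity theorem (the open-cone argument) to conclude $F_d \equiv 0$. The paper instead uses the dual, intersection-of-dense-opens form: for each nonzero $f_j$ the set $\{f_j \neq 0\}$ is open dense in $U$, so by Baire there is a single point $z_0$ at which \emph{every} nonzero homogeneous part is nonzero; then $P_{z_0}$ being a polynomial of some degree $d$ immediately forces $f_j \equiv 0$ for $j > d$, with no identity-theorem step needed. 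Both arguments are valid; the paper's is a bit shorter because locating one generic point sidesteps the need to propagate vanishing from an open set of the sphere to all of $\C^n$. Your degree argument is also correct and matches the paper's, once you observe that the chosen $z$ (resp.\ $z_0$) realizes the top coefficient.
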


\begin{proof}
Write
\begin{equation}
F(z) = \sum_{j=0}^\infty f_j(z) ,
\end{equation}
where $f_j$ is a homogeneous polynomial of degree $j$.  If $f_j$ is not
identically zero then $f_j$ is nonzero on a second category subset of 
$U$.  Hence there exists a $z_0 \in U$ such that
for all $j$, $f_j(z_0) = 0$ if and only if $f_j \equiv 0$.  Take
\begin{equation}
P_{z_0}(t) = F(tz_0) = \sum_{j=0}^\infty t^j f_j(z_0) .
\end{equation}
As $P_{z_0}(t)$ is a polynomial of degree $d$, then $f_j \equiv 0$ for
all $j > d$, that is, $F$ is a polynomial of degree $d$, and the degree
is maximized by $P_{z_0}(t)$.
\end{proof}

\begin{lemma} \label{lem:PolyExtn}
Suppose $M, H \subset \C^{n+1}$, $n > 1$, are defined by
\begin{equation} 
M : 
w = \sum_{j=1}^n \bigl(\sabs{z_j}^2 + \lambda_j (z_j^2 + \bar{z}_j^2) \bigr),
\quad
H : 
\begin{cases}
\Re w \geq \sum_{j=1}^n \bigl(\sabs{z_j}^2 + \lambda_j (z_j^2 + \bar{z}_j^2) \bigr),
\\
\Im w = 0 .
\end{cases}
\end{equation}
where $0 \leq \lambda_j < \frac{1}{2}$. 
Suppose $f \colon M \to \C$ is a CR function such that if
$M$ is parametrized by $z$, $f(z,\bar{z})$ is a polynomial.

Then there exists a holomorphic polynomial $P(z,w)$
such that $P|_M = f$.
Furthermore, if $f$ is homogeneous of degree $d$, then 
$P$ is of weighted degree $d$, that is,
\begin{equation}
P(z,w) = \sum\limits_{\sabs{\alpha} + 2k = d}\, c_{\alpha,k}\,z^\alpha\, w^k.
\end{equation}
\end{lemma}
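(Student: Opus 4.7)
The strategy is to reduce to the $n=1$ case of Lemma~\ref{lem:PolyExtn1} by slicing $M$ with complex lines through the origin, and then reassemble via Proposition~\ref{prop:formalpoly} together with the scaling symmetry of the quadric. Since the biholomorphism $\sigma_r\colon(z,w)\mapsto(rz,r^2w)$ preserves $M$ for $r>0$, decomposing $f=\sum_d f_d$ into $(z,\bar z)$-homogeneous pieces and noting that $\sigma_r^*f=\sum_d r^df_d$ is CR on $M$ for every $r$ shows, by linear independence in $r$, that each $f_d$ is CR, so I may assume $f$ is homogeneous of some degree $d$. For each $c>0$ the leaf $M\cap\{w=c\}=\{z\in\C^n:Q(z,\bar z)=c\}$, with $Q(z,\bar z)=\sum_j(\sabs{z_j}^2+\lambda_j(z_j^2+\bar z_j^2))$, is a compact connected real hypersurface in $\C^n$ whose induced CR structure coincides with the one inherited from $M$; since $n>1$, Hartogs-Bochner supplies the unique holomorphic extension $F(\cdot,c)$ on $\{Q<c\}$, smooth up to the boundary.

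Fix $a\in\C^n\setminus\{0\}$ and put $\mu(a)=\sum_j\lambda_j a_j^2$. The slice $M_a:=M\cap(\C a\times\C)$ becomes, after rescaling $t=\snorm{a}^{-1}s$ and rotating $s=e^{-i\theta/2}u$ with $\theta=\arg\mu(a)$ (and $\theta=0$ if $\mu(a)=0$), a one-dimensional Bishop surface $w=\sabs{u}^2+\tilde\lambda(u^2+\bar u^2)$ with $\tilde\lambda=\sabs{\mu(a)}/\snorm{a}^2<\tfrac12$. The restriction of $F(\cdot,c)$ to the complex line $z=ta$ supplies exactly the leaf-extension data required by Lemma~\ref{lem:PolyExtn1}, so that lemma yields a polynomial in $(u,w)$ of weighted degree $d$ that equals $f$ on $M_a$; undoing the change of variables then shows that $F(ta,c)$ is a polynomial in $t$ of degree at most $d$ for every $a$. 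Applying Proposition~\ref{prop:formalpoly} to the Taylor expansion of $z\mapsto F(z,c)$ at the origin therefore shows that $F(z,c)$ is a polynomial in $z$ of degree at most $d$.

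To obtain polynomiality in $w$ as well, observe that uniqueness of the Hartogs-Bochner extension together with the homogeneity of $f$ give $F(rz,r^2c)=r^dF(z,c)$ for all $r>0$. Writing $F(z,c)=\sum_{\sabs{\alpha}\leq d}A_\alpha(c)z^\alpha$ and matching coefficients of $z^\alpha$ forces $A_\alpha(r^2c)=r^{d-\sabs{\alpha}}A_\alpha(c)$; hence $A_\alpha\equiv 0$ unless $d-\sabs{\alpha}=2k$ for some $k\geq 0$, and in that case $A_\alpha(c)=A_\alpha(1)c^k$. Consequently $P(z,w):=\sum_{\sabs{\alpha}+2k=d}A_\alpha(1)z^\alpha w^k$ is a holomorphic polynomial of weighted degree $d$, and $P|_M=f$ follows from $F(z,c)\vert_{Q(z,\bar z)=c}=f(z,\bar z)$ for each $c>0$. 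The main technical point is the verification of the leaf-extension hypothesis of Lemma~\ref{lem:PolyExtn1} on each slice $M_a$, which is exactly where the $n>1$ CR hypothesis enters, through Hartogs-Bochner on the complex ellipsoid $\{Q<c\}\subset\C^n$.
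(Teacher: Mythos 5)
Your proof is correct, but it takes a genuinely different route from the paper's. The paper first invokes the real-analytic part of Lemma~\ref{lem:SmExtnCR} (since a polynomial $f$ is real-analytic) to obtain an extension $P(z,w)$ that is holomorphic jointly in $(z,w)$ on a neighborhood of $H\setminus\{0\}$, and in particular near the interior point $(0,\ldots,0,1)$. It then restricts $P$ to each complex $2$-plane through the $w$-axis, applies Lemma~\ref{lem:PolyExtn1} there, and concludes with a single application of Proposition~\ref{prop:formalpoly} in $\C^{n+1}$ at the basepoint $(0,\ldots,0,1)$. You instead work entirely leafwise: you reduce to $(z,\bar z)$-homogeneous $f$ via the weighted scaling $\sigma_r(z,w)=(rz,r^2w)$, apply Lemma~\ref{lem:PolyExtn1} on each complex-line slice $z=ta$ of $M$, apply Proposition~\ref{prop:formalpoly} in $\C^n$ separately for each level $c$, and then recover the $w$-dependence by the homogeneity relation $A_\alpha(c)=A_\alpha(1)c^k$. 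The trade-off: the paper's route leans on the heavier joint-holomorphic extension but then finishes in one stroke, while yours uses only the leafwise Hartogs--Bochner extension (no joint regularity in $(z,w)$ needed) at the cost of the homogeneous decomposition and an explicit reassembly in $w$. Both your slice verification (that $\tilde\lambda=\sabs{\mu(a)}/\snorm{a}^2<\tfrac12$ and that $F(\cdot,c)\vert_{z=ta}$ furnishes the leaf-extension data for Lemma~\ref{lem:PolyExtn1}) and your use of $\sigma_r^*$ to show each $f_d$ is CR are correct, so the argument closes.
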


\begin{proof}
By Lemma~\ref{lem:SmExtnCR}
we obtain an extension $P(z,w)$, holomorphic in some neighborhood of $H \setminus \{ 0 \}$.
In particular it is holomorphic on a neighborhood of the point $(0,\ldots,0,1)$.

Pick a $z \in \C^n$ with $\snorm{z}=1$.  Then we use the mapping
from $\C^2$ to $\C^{n+1}$ given by
\begin{equation}
(\xi,w) \mapsto (\xi z, w)
\end{equation}
and the one dimensional case to find the mapping $P$ extends to a
polynomial along every plane that contains the $w$-axis.  In particular, $P$
is a polynomial along every complex line through the point $(0,\ldots,0,1)$,
and by Proposition~\ref{prop:formalpoly} $P$ is a polynomial.

The degree claim follows easily: If we plug in the expression for $w$
into a monomial in $z$ and $w$ of weighted degree $k$ we get a homogeneous
polynomial 
of degree $k$ in $z$ and $\bar{z}$.  So taking the weighted degree $d$ part
of $P$ would have sufficed above.  It is easy to see $P$ must be unique
and therefore it only has a weighted degree $d$ part.
\end{proof}

The polynomial case only holds for the quadratic models.  In general
even if $M$ is given by a polynomial, a polynomial $f$ does not mean $F$ is
a polynomial.

\begin{example}
Take $M$ to be given by $w = \sabs{z}^2+\sabs{z}^4$, and $H$ given as usual.
The function $F(z,w) = \sqrt{1+4w}$ (some branch of the root) is a
holomorphic function in a neighborhood of the origin.  $F$ is clearly not
a polynomial, but on $M$ parametrized by $z$ the function becomes
$1+2\sabs{z}^2$, a polynomial.
\end{example}

%%%%%%%%%%%%%%%%%%%%%%%%%%%%%%%%%%%%%%%%%%%%%%%%%%%%%%%%%%%%%%%%%%%%%%%%%%%%

\section{Smooth case} \label{section:smooth}

In this section we prove the assertion for the smooth case in Theorem~\ref{thm:mainlocal}.
We begin by describing the coordinates we use and then collect some
crucial building blocks of our proof organized as lemmas and propositions.
Recall we consider the following locally near the origin:
\begin{equation}
\begin{aligned}
M&: w=Q(z,\bar{z}) + E(z,\bar{z}),
&M^{quad}&: w=Q(z,\bar{z}),\\
H&:  \begin{cases}\Re w \geq Q(z,\bar{z}) + E(z,\bar{z}) , \\
\Im w = 0,\end{cases}
&H^{quad}&:  \begin{cases}\Re w \geq Q(z,\bar{z}), \\
\Im w = 0,\end{cases}
\end{aligned}
\end{equation}
where $Q(z,\bar{z}) = \sum\limits_{j=1}^{n} \bigl(\sabs{z_j}^2 + \lambda_j
(z_j^2 +\bar{z}_j^2)\bigr)$, $0 \leq \lambda_j < \frac{1}{2}$, and $E$ is
$O(3)$ and real-valued.

\begin{lemma}\label{lem:OrderK}
Let $U$, $M$, $H$, $f$ be as in Theorem~\ref{thm:mainlocal}.  Parametrizing
$M$ by $z$ as usual, suppose $f$ is $O(k)$ at $0$.
Then the $k$th order homogeneous part $f_k$ of $f$ satisfies the hypotheses
of Theorem~\ref{thm:mainlocal} for $M^{quad}$.
In particular, there exists a holomorphic polynomial $P(z,w)$ in $\C^{n+1}$
of weighted degree $k$ such that $(f-P)\vert_M$ is $O(k+1)$ at $0$.
\end{lemma}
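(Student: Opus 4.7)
The plan is to first show that the degree-$k$ homogeneous part $f_k$ satisfies the hypotheses of Theorem~\ref{thm:mainlocal} on the quadratic model $M^{quad}$, then apply the polynomial extension result (Lemma~\ref{lem:PolyExtn1} when $n = 1$, or Lemma~\ref{lem:PolyExtn} when $n > 1$) to produce a weighted-degree-$k$ holomorphic polynomial $P(z,w)$ with $P\vert_{M^{quad}} = f_k$, and finally verify by a Taylor comparison on $M$ that $(f-P)\vert_M$ vanishes to order $k+1$ at the origin.

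For the hypothesis check when $n > 1$, write $\rho(z,\bar z) = Q(z,\bar z) + E(z,\bar z)$. Being CR on $M_{CR}$ (after parametrizing $M$ by $z$) is equivalent to the pointwise identities $f_{\bar z_j}\,\rho_{\bar z_k} = f_{\bar z_k}\,\rho_{\bar z_j}$ for all $j,k$, extended through the origin by continuity. Since $f$ vanishes to order $k$ and $\rho_{\bar z_\ell}$ starts at degree $1$, each side of this identity starts at degree $k$; matching the degree-$k$ terms gives $(f_k)_{\bar z_j}\,Q_{\bar z_k} = (f_k)_{\bar z_k}\,Q_{\bar z_j}$, which is precisely the CR condition for $f_k$ on $M^{quad}$ at its CR points.

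For the hypothesis check when $n = 1$, I first note the Fourier characterization: for a smoothly bounded simply connected planar domain $D$, a continuous boundary function extends continuously to $\overline{D}$ and holomorphically on $D$ if and only if $\int_{\partial D} g(\zeta)\,\zeta^\ell\, d\zeta = 0$ for all $\ell \geq 0$ (reduce to the disk by Riemann mapping). Thus the hypothesis on $f$ is equivalent to
\begin{equation}
I_\ell(s) := \int_{M \cap \{w = s\}} f(\zeta,\bar\zeta)\,\zeta^\ell\, d\zeta = 0
\end{equation}
for all $\ell \geq 0$ and all small $s > 0$. Rescaling via $\zeta = \sqrt{s}\,\eta$, the curve $Q + E = s$ becomes a smooth $O(\sqrt{s})$ perturbation $C_s$ of the ellipse $\{Q = 1\}$ (since $E = O(3)$ gives $s^{-1}E(\sqrt{s}\eta, \sqrt{s}\bar\eta) = O(\sqrt{s})$), and expanding $f$ into homogeneous pieces yields
\begin{equation}
I_\ell(s) = s^{(k+\ell+1)/2}\left[\int_{C_s} f_k(\eta,\bar\eta)\,\eta^\ell\, d\eta + O(\sqrt{s})\right].
\end{equation}
Since $I_\ell(s) \equiv 0$ and $C_s \to \{Q = 1\}$ smoothly as $s \to 0^+$, the leading coefficient $\int_{Q=1} f_k(\eta,\bar\eta)\,\eta^\ell\, d\eta$ vanishes; homogeneity of $f_k$ and $Q$ then yields $\int_{Q = c} f_k(\zeta,\bar\zeta)\,\zeta^\ell\, d\zeta = 0$ for every $c > 0$, so $f_k$ satisfies the leafwise extension hypothesis on $M^{quad}$.

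Given the polynomial $P$ produced by Lemma~\ref{lem:PolyExtn1} or \ref{lem:PolyExtn}, write $P = \sum_{|\alpha| + 2k_0 = k} c_{\alpha,k_0}\,z^\alpha\, w^{k_0}$; then on $M$, $P(z, Q+E) = \sum c_{\alpha,k_0}\,z^\alpha\,(Q+E)^{k_0}$. Each $(Q+E)^{k_0} - Q^{k_0}$ expands into a sum of products containing at least one factor of $E$ (of order $\geq 3$) and at most $k_0 - 1$ factors of $Q$ (of order $2$), hence is of order $\geq 2(k_0-1) + 3 = 2k_0 + 1$, and so $z^\alpha \bigl[(Q+E)^{k_0} - Q^{k_0}\bigr]$ is of order $\geq k+1$. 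Since $P(z, Q) = f_k$, this gives $P\vert_M = f_k + O(k+1)$, hence $(f - P)\vert_M = O(k+1)$. I expect the main obstacle to be the $n=1$ step: justifying the Fourier characterization on the perturbed planar domains $M \cap \{w = s\}$ and controlling the $s \to 0^+$ asymptotics of $I_\ell(s)$ uniformly across the moving integration contour $C_s$.
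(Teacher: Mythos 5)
Your proposal is correct and follows essentially the same route as the paper: extract the moment/CR conditions satisfied by $f$, rescale to pass them to $f_k$ on $M^{quad}$, apply Lemma~\ref{lem:PolyExtn1} or Lemma~\ref{lem:PolyExtn}, and then check the remainder is $O(k+1)$. Your $n>1$ argument (matching degree-$k$ terms in the identities $f_{\bar z_j}\rho_{\bar z_k}=f_{\bar z_k}\rho_{\bar z_j}$) is exactly the paper's vector-field computation $\lim_{r\to 0} r^{-k}Xf(rz,r\bar z)=\widetilde{X}f_k$ written in component form, and your final expansion of $(Q+E)^{k_0}-Q^{k_0}$ is a more explicit version of the paper's one-line conclusion.

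The one point where the paper is more careful, and where you correctly flag your own uncertainty, is the $n=1$ contour analysis. You rescale $\zeta=\sqrt{s}\,\eta$ and assert that the curve $C_s$ converges smoothly to the ellipse $\{Q=1\}$ so that the leading coefficient $\int_{C_s}f_k\,\eta^\ell\,d\eta$ has a limit; the paper justifies this by the implicit function theorem, parametrizing $M\cap\{w=r^2\}$ as $r\varphi(r,\theta)e^{i\theta}$ with $\varphi$ smooth jointly in $(r,\theta)$ up to $r=0$, which gives the uniform control needed to pass to the limit inside the integral. Also, for the Fourier characterization of leafwise extendibility you invoke the Riemann mapping theorem, whereas the paper argues by decomposing a harmonic extension as $h(z)-g(\bar z)$, applying Green's theorem, and using Mergelyan; both are standard, and yours is fine. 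With the smooth parametrization filled in, your argument is complete.
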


\begin{proof}
Let us first consider the $n=1$ case.

For $s \geq 0$,
we wish to parametrize $M\cap\{w=s\}$ by $r\varphi(r,\theta)e^{i\theta}$ where $s=r^2$, and $\varphi$ is real-valued,
i.e.,
\begin{equation}
r^2 = r^2 \varphi^2 + 2 \lambda r^2 \varphi^2 \cos \theta
+ E(
r\varphi e^{i\theta},
r\varphi e^{-i\theta}) .
\end{equation}
Since $E$ is $O(3)$ we may divide by $r^2$ to get
\begin{equation}
1 = \varphi^2 + 2 \lambda \varphi^2 \cos \theta
+ \frac{1}{r^2} E(
r\varphi e^{i\theta},
r\varphi e^{-i\theta}) .
\end{equation}
We may now apply the
implicit function theorem to this equation to guarantee that such a
smooth $\varphi$ exists for a neighborhood of $\{ 0 \} \times [0,2\pi]$,
that is, for $r$ in a small interval around zero and all $\theta$.

By hypothesis, $f$ has a holomorphic
extension on each leaf.  The hypothesis is equivalent to saying for each $r>0$ and
$\ell \geq 0$,
\begin{equation}
\smashoperator[r]{\int\limits_{M\cap\{w = r^2\}}}
f\left(\zeta,\bar{\zeta}\right)\zeta^\ell \, d\zeta = 0 .
\end{equation}
To see why this condition is equivalent to the hypothesis
(it is standard for a circle),
simply extend $f$ to $H \cap \{ w = r^2 \}$ as a harmonic function
$h(z)-g(\bar{z})$,
then apply Green's theorem, to obtain
$0 = \int_{H\cap \{w=r^2\}} g'(\bar{\zeta}) \zeta^\ell d\zeta \wedge
d\bar{\zeta}$, and by the density of holomorphic polynomials (Mergelyan's
theorem) we get that $g'(\bar{z}) = 0$.  Hence $f$ extends holomorphically.
The integral condition
is equivalent to
\begin{equation}
\int\limits_0^{2\pi}
f\left(r\varphi e^{i\theta},r\varphi e^{-i\theta}\right)
\varphi^\ell  (\varphi_\theta+i\varphi)e^{i(\ell +1)\theta}\, d\theta = 0.
\end{equation}
Let $f_k$ be the $k$th order part of $f$.  The above expression is
divisible by $r^k$.  Dividing and letting $r \to 0$, and then multiplying
through by $r^k$ again we obtain
\begin{equation}
\int\limits_0^{2\pi} 
f_k\left(r\varphi e^{i\theta},r\varphi e^{-i\theta}\right)
\varphi^\ell(0,\theta)
\bigl(\varphi_\theta(0,\theta)+i\varphi(0,\theta)\bigr)
e^{i(\ell+1)\theta}\,d\theta
= 0.
\end{equation}
This shows $f_k$ satisfies the conditions of Lemma~\ref{lem:PolyExtn1} on
$M^{quad}$.  Therefore, there exists a polynomial $P$ in $z$ and $w$ such
that $P = f_k$ on $M^{quad}$, that is $P\bigl(z,Q(z,\bar{z})\bigr) =
f_k(z,\bar{z})$.  Therefore
$f(z,\bar{z}) - P\bigl(z,Q(z,\bar{z})+E(z,\bar{z})\bigr)$ is of order $k+1$.

For $n > 1$ we have a local condition, the function is CR.
Write $w = \rho(z,\bar{z})$ for $M$.  Then the CR vector fields on $M$ in
the intrinsic $z$ coordinates can be written as
\begin{equation}
X =
\rho_{\bar{z}_j} \frac{\partial}{\partial \bar{z}_\ell}
-
\rho_{\bar{z}_\ell} \frac{\partial}{\partial \bar{z}_j} .
\end{equation}
For any such $X$ we have $Xf = 0$. Furthermore,
\begin{equation}
\lim_{r \to 0}
\frac{1}{r^k} Xf (rz,r\bar{z}) = 
\widetilde{X}f_k(z,\bar{z}) ,
\end{equation}
where
\begin{equation}
\widetilde{X} =
Q_{\bar{z}_j} \frac{\partial}{\partial \bar{z}_\ell}
-
Q_{\bar{z}_\ell} \frac{\partial}{\partial \bar{z}_j}
\end{equation}
are precisely the CR vector fields on $M^{quad}$.
Therefore, just as above we find $f_k$ satisfies the hypothesis of
Theorem~\ref{thm:mainlocal} for $M^{quad}$.
Apply Lemma \ref{lem:PolyExtn} to obtain a $P$.  We finish the proof
exactly as for $n=1$ case.
\end{proof}

\begin{lemma}\label{lem:ExtnConts}
Let $U$, $M$, $H$, $f$ be as in Theorem~\ref{thm:mainlocal}.
If $f\in C^\infty(M)$, then $F\in C(H)$.
\end{lemma}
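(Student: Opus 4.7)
The plan is to peel off a Taylor polynomial of $f$ at the singular point using Lemma~\ref{lem:OrderK}, and then control the remainder on each leaf by the maximum modulus principle. First I would iterate Lemma~\ref{lem:OrderK}: starting from $f$, which is at least $O(0)$, repeated application produces holomorphic polynomials $P_0, P_1$ on $\C^{n+1}$ (of weighted degrees $0$ and $1$) so that $g := f - (P_0+P_1)\vert_M$ is smooth on $M$, vanishes to order $2$ at $0$, and still satisfies hypothesis (i) or (ii) of Theorem~\ref{thm:mainlocal}; the latter is automatic, since $P := P_0+P_1$ is entire, hence $P\vert_M$ is CR for $n>1$ and trivially extends holomorphically along each leaf for $n=1$. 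Because $P$ is continuous on all of $\C^{n+1} \supset H$, continuity of the eventual extension $F := P + G$ at $0$ reduces to showing that the leaf-by-leaf extension $G$ of $g$ satisfies $G(z,w) \to 0$ as $(z,w) \to 0$ in $H$.

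Next I would invoke Lemma~\ref{lem:SmExtnCR} on intervals $I \subset (0,\delta_w)$, thinking of $H$ inside the natural domain $\Omega \subset \C^n \times \R$ that it half-bounds. This produces a $G$ which is smooth up to $M \setminus \{0\}$, holomorphic along each leaf $L_c := H \cap \{w = c\}$, continuous up to the boundary $M \cap \{w=c\}$ of $L_c$, and equal to $g$ there. The key estimate is a consequence of ellipticity: with $c_1 := \min_j (1 - 2\lambda_j) > 0$ one has
\begin{equation}
Q(z,\bar{z}) \;=\; \sum_{j=1}^n \bigl(\sabs{z_j}^2 + \lambda_j(z_j^2+\bar{z}_j^2)\bigr) \;\ge\; c_1 \snorm{z}^2,
\end{equation}
and since $E = O(3)$, after shrinking $\delta_z, \delta_w$ there is $c_0 > 0$ with $\snorm{z}^2 \le c_0 |w|$ for every $(z,w) \in H$. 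Since $g$ is $O(2)$ on $M$ at $0$, one has $|g| \le C \snorm{z}^{2}$ on $M \cap \{|w| < \delta_w\}$. The maximum modulus principle applied to the function $G\vert_{L_c}$, which is holomorphic on the bounded open set $L_c$ and continuous up to $\partial L_c = M \cap \{w=c\}$, then yields
\begin{equation}
\sup_{L_c} |G| \;\le\; \sup_{M \cap \{w=c\}} |g| \;\le\; C\, c_0\, c,
\end{equation}
so $|G(z,w)| \le C c_0 |w|$ on $H$ near $0$. Therefore $G$ extends continuously by $0$ at the origin, and $F = P + G \in C(H)$.

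The only real bookkeeping is to verify that the residual $g$ inherits the hypothesis of Theorem~\ref{thm:mainlocal}, which is immediate since $P$ is a holomorphic polynomial on $\C^{n+1}$: for $n > 1$ it is CR, and for $n = 1$ any extension of $f$ along a leaf becomes an extension of $g$ upon subtracting the polynomial $P$. The main conceptual point is that ellipticity forces the leaves near $0$ to shrink like $\sqrt{|w|}$, automatically converting polynomial vanishing of $g$ on $M$ into corresponding decay of $G$ on $H$; beyond this the argument is entirely routine given the lemmas of the previous sections.
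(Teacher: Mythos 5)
Your proof is correct, and the core mechanism is the same as the paper's: apply the maximum modulus principle leaf by leaf, using ellipticity to ensure the boundary circles/spheres $M\cap\{w=c\}$ shrink to the origin. However, you take a noticeably longer route than necessary. You invoke Lemma~\ref{lem:OrderK} to peel off a weighted degree-$1$ polynomial $P$, obtain a remainder $g$ vanishing to order $2$, verify that $g$ still satisfies the hypotheses, and then prove the quantitative decay $|G| \lesssim |w|$. The paper instead subtracts only the constant $f(0)$ and observes that for $(z,s)\in H$,
\begin{equation*}
\abs{F(z,s)-f(0)} \le \sup\bigl\{\abs{f(\zeta,\bar{\zeta})-f(0)} : (\zeta,s)\in M\bigr\},
\end{equation*}
which tends to $0$ as $s\to 0$ simply because $f$ is continuous and the leaves of $M$ shrink to the origin. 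No Taylor expansion, no Lemma~\ref{lem:OrderK}, and no quantitative estimate on $\snorm{z}^2/|w|$ are needed for mere continuity. Your argument does yield a modulus-of-continuity estimate $|F(z,w)-P(z,w)| = O(|w|)$, which is strictly more information than the lemma asks for; but that extra strength is exactly what you pay for by bringing in the polynomial-extension machinery, and for this lemma the paper's two-line version is the right level of effort. If you wanted a genuinely minimal proof, note that the step ``$g$ is $O(2)$'' is doing no work you couldn't get from the constant case: $f-f(0)$ is $O(1)$, its sup over $M\cap\{w=c\}$ tends to $0$, and that already closes the argument.
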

\begin{proof}

By Lemma~\ref{lem:SmExtnCR}, $F\in C(H\setminus\{0\})$. The continuity of $F$ at the origin follows from the maximum principle: For $(z,s)\in H$ we have
\[\abs{F(z,s)-f(0)} \le \sup\Big\{\abs{f(\zeta,\bar{\zeta})-f(0)}\,:\,(\zeta,s)\in M\Big\}.\]

As $s\to 0$, $M\ni(\zeta,s) \to 0$.
\end{proof}

As the derivatives $F_{z_j}$ and $F_s$ extend smoothly to $H \setminus \{ 0 \}$ we
must next prove they extend through the origin smoothly.  First we show
their restrictions to $M$ extend smoothly through the origin.  As the
extension is unique, we abuse notation
slightly and call the extensions $F_{z_j}$ and $F_s$ and think of them as a
functions on $H$ or $M$ as needed.

\begin{lemma}\label{lem:DerFSm}
Let $U$, $M$, $H$, $f$ be as in Theorem~\ref{thm:mainlocal}.
If $f\in C^\infty(M)$, then $F_{z_j},F_s\in C^\infty(M)$, for $1\leq j\leq n$.
\end{lemma}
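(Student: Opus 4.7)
The plan is to approximate $f$ by weighted-homogeneous polynomials using Lemma~\ref{lem:OrderK}, bound the difference $G_N:=F-P^{(N)}$ on each Levi leaf by the maximum principle, and convert those sup bounds into derivative estimates on $M$ via a parabolic rescaling of the leaves. Only smoothness at the CR singularity $0$ needs treatment, as Lemma~\ref{lem:SmExtnCR} already gives smoothness of $F_{z_j}|_M$ and $F_s|_M$ away from $0$.

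First, I would iterate Lemma~\ref{lem:OrderK}: for each $N\geq 0$ construct a holomorphic polynomial $P^{(N)}(z,w)=\sum_{k=0}^N P_k(z,w)$, with $P_k$ of weighted degree $k$, so that $(f-P^{(N)}|_M)(z,\bar z)=O(\snorm{z}^{N+1})$ near $0$. Set $G_N=F-P^{(N)}$. By Lemma~\ref{lem:ExtnConts}, $G_N\in C(H)$, and on each leaf $H\cap\{w=c\}$ with $c>0$ small it is holomorphic in $z$. Ellipticity and nondegeneracy force $\snorm{z}\leq C\sqrt{c}$ on $M\cap\{w=c\}$, so the boundary data of $G_N$ on that leaf is $O(c^{(N+1)/2})$, and the maximum principle gives
\begin{equation}
\snorm{G_N(z,c)}\leq C\,c^{(N+1)/2}
\end{equation}
throughout $H$ near $0$.

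Second, I would promote the sup estimate to derivative bounds by a parabolic rescaling $z=\sqrt c\,\zeta$. Since $E=O(3)$, the rescaled leaves $\tilde M_c:=\{\zeta:(\sqrt c\,\zeta,c)\in M\}$ converge in $C^\infty$ as $c\to 0^+$ to the fixed smooth ellipsoid $\tilde M_0:=\{\zeta:Q(\zeta,\bar\zeta)=1\}$, and the Bochner--Martinelli extension (or, for $n=1$, the Poisson extension) on this family enjoys $C^k$ estimates up to the boundary that are uniform in $c$. Applied to $\tilde G_c(\zeta):=c^{-(N+1)/2}G_N(\sqrt c\,\zeta,c)$, which is holomorphic in the interior of the rescaled leaf and has $C^k$ boundary data bounded uniformly in $c$, this gives a uniform $C^k$ bound for $\tilde G_c$ up to the boundary. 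Unwinding the scaling yields
\begin{equation}
\snorm{\partial_z^{\alpha}G_N(z,c)}\leq C_{\alpha,N}\,c^{(N+1-\sabs{\alpha})/2}
\end{equation}
near $0$ in $H$. Since $c\sim\snorm{z}^2$ on $M$, this gives $\snorm{\partial_z^{\alpha}(F-P^{(N)})|_M(z)}\leq C\snorm{z}^{N+1-\sabs{\alpha}}$; taking $N$ arbitrarily large shows that on $M$, $F_{z_j}$ agrees with the smooth polynomial $P^{(N)}_{z_j}|_M$ to arbitrarily high order in any $C^r$-norm at $0$, hence $F_{z_j}|_M\in C^\infty(M)$.

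The derivative $F_s|_M$ is handled by the same scheme, differentiating the Bochner--Martinelli extension in the leaf parameter $s$ (the leaf family varies smoothly in $s$ transverse to $M$ away from $0$); this produces analogous bounds $\snorm{\partial_s^{j}\partial_z^{\alpha}G_N}\leq C\,c^{(N+1-\sabs{\alpha})/2-j}$, and hence $F_s|_M\in C^\infty(M)$ upon sending $N\to\infty$. The main obstacle is the uniform-in-$c$ up-to-the-boundary $C^k$ estimate for $\tilde G_c$, which rests on the smooth convergence of the rescaled leaves $\tilde M_c$ to the fixed ellipsoid $\tilde M_0$; this in turn uses the ellipticity of $Q$, nondegeneracy, and the smooth vanishing of $E$ to third order at the origin.
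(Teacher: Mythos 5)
Your approach is genuinely different from the paper's and, while plausible in outline, it contains a gap at precisely the step that does the work.

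The paper's proof of Lemma~\ref{lem:DerFSm} is algebraic: away from $0$ one has $f_{\bar z_j}=F_s\,\rho_{\bar z_j}$ on $M$, so the lemma reduces to smoothly dividing $f_{\bar z_j}$ by $\xi_j:=\rho_{\bar z_j}$ (which vanishes at $0$). Lemma~\ref{lem:OrderK} shows no finite jet obstructs the division, and the Malgrange--Mather division theorem together with Malgrange's formal-to-actual ideal-membership theorem then yield $F_s\vert_M=f_{\bar z_j}/\xi_j\in C^\infty(M)$. Smoothness of $F_{z_j}\vert_M$ follows from $f_{z_j}=F_{z_j}+F_s\,\rho_{z_j}$. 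This avoids any quantitative control of $F$ in the interior of $H$, any rescaling, and any use of the maximum principle.

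Your scheme uses Lemma~\ref{lem:OrderK} for the same preparatory step, but then takes a quantitative route: maximum-principle bounds $\sabs{G_N}\le Cc^{(N+1)/2}$ on leaves, a parabolic rescaling $z=\sqrt{c}\,\zeta$, uniform up-to-the-boundary estimates for the Bochner--Martinelli extension on the rescaled leaves, and then unwinding of the scaling. The gap is in how you pass from the pure $z$-derivative bound $\sabs{\partial_z^\alpha G_N}\le C c^{(N+1-\sabs{\alpha})/2}$ to the conclusion that $F_{z_j}\vert_M$ is smooth at $0$. Smoothness of $F_{z_j}\vert_M$ (with $M$ parametrized by $z$) requires controlling tangential derivatives of $F_{z_j}\vert_M$, and these mix $\partial_{z}$ and $\partial_s$: for instance $\partial_{\bar z_k}\bigl(F_{z_j}\vert_M\bigr)=F_{z_j s}\,\rho_{\bar z_k}$. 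So the very conclusion ``$F_{z_j}$ agrees with $P^{(N)}_{z_j}\vert_M$ to high order in $C^r$'' already needs the mixed estimates $\sabs{\partial_s^m\partial_z^\alpha G_N}\lesssim c^{(N+1-\sabs{\alpha})/2-m}$ which you only assert afterwards for the $F_s$ part. Moreover the justification you offer for the $s$-derivative bound — ``the leaf family varies smoothly in $s$ transverse to $M$ away from $0$'' — is exactly what one does \emph{not} get to use: the degeneration at $c=0$ is the issue, and the uniform control must be established through the limit. What is actually needed is that the rescaled family $\tilde M_{t^2}$, the rescaled boundary data $\tilde g_{t^2}$, and hence $\tilde G_{t^2}$ depend \emph{smoothly on $t=\sqrt{c}$ up to $t=0$} with all derivatives uniformly bounded, after which one must carefully unwind $\partial_c=\tfrac{1}{2t}\partial_t$ and the substitution $\zeta=z/t$; the apparent loss of a full power of $c$ per $\partial_s$ then comes from the factors $1/t$ combining with $\sabs{z}\lesssim t$ on $H$, and this bookkeeping (which is delicate) must be written out. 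Until the mixed bounds are actually derived, the argument is incomplete even for the $F_{z_j}$ claim. Filling these details is likely possible, but the resulting proof would be considerably heavier than the paper's division-theorem argument, which sidesteps all quantitative leafwise estimates.
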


\begin{proof}
Let $w = \rho(z,\bar{z})$ define $M$, then write $\xi_j = \rho_{\bar{z}_j} =
z_j + 2 \lambda_j \bar{z}_j + \cdots$.
We start by taking derivatives outside the origin:
\begin{equation}
f_{\bar{z}_j} = F_s \xi_j .
\end{equation}
Lemma \ref{lem:OrderK} says, for any order $m$, we may write $f$ as
\begin{equation}
f(z,\bar{z}) = P(z,\rho) + R(z,\bar{z}) ,
\end{equation}
where $P(z,w)$ is a polynomial of degree $m$
and $R$ is $O(m+1)$.  Therefore,
\begin{equation}
f_{\bar{z}_j} = P_w(z,\rho) \xi_j + R_{\bar{z}_j}(z,\bar{z}) .
\end{equation}
In particular, no lower order terms are an obstruction to the division by
$\xi_j$.
The variables $\xi,\bar{\xi}$ are a smooth change of variables, we therefore
just think of everything in terms of $\xi$ and $\bar{\xi}$.

Using the real part of $\xi_j$ as a variable we apply the Malgrange-Mather
division theorem~\cite{Malgrange}*{Chapter V}.
We obtain smooth functions $q$ and
$r$ such that
\begin{equation} \label{eq:fjdiv}
f_{\bar{z}_j} = q \xi_j + r ,
\end{equation}
where $r$ does not depend on the real part of $\xi_j$.
By the above argument, $r$ cannot have any finite order terms, otherwise the
finite order part could be put (modulo higher order terms) into something
divisible by $\xi_j$ and hence would depend on the real part of $\xi_j$.

Now, consider the ideal $I$ generated by $\xi_j$.
To show $f_{\bar{z}_j}\in I$,  it suffices, by a theorem of Malgrange \cite{Malgrange}*{Theorem 1.1' in Chapter VI}, to show the Taylor series of $f_{\bar{z}_j}$ at each point belongs formally to $I$.
This is clear outside the origin as $f_{\bar{z}_j}$ is in fact divisible by $\xi_j$ there.
At the origin, \eqref{eq:fjdiv} along with the fact that $r$ vanishes to
infinite order shows that formally $f_{\bar{z}_j}$ is in $I$ because its
Taylor series is equal to the Taylor series of a function that is in the ideal, namely $q\xi_j$.
Since $f_{\bar{z}_j}$ is in $I$, it is divisible by $\xi_j$ and therefore $F_s|_M$ extends smoothly through the origin.

Now that $F_s|_M$ is smooth we write
\begin{equation}
f_{z_j} = F_{z_j} + F_s \bar{\xi}_j .
\end{equation}
As $f_{z_j}$ and $F_s \bar{\xi}_j$ are smooth on $M$, then $F_{z_j}$ 
extends smoothly through the origin as well.
\end{proof}

We now prove the smooth part of Theorems~\ref{thm:mainlocal}
and \ref{thm:mainglobal}. 
For reader convenience we state the two results as two lemmas.

\begin{lemma} \label{lem:localsmext}
Let $U$, $M$, $H$, $f$ be as in Theorem~\ref{thm:mainlocal}.
Then there exists a function $F \in C^\infty(H)$ such that
$F$ is CR on $H \setminus M$ and $F|_M = f$.
Furthermore, $F$ has a formal power series at 0 in $z$ and $w$.
\end{lemma}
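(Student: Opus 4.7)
The plan is threefold: define $F$ and establish its smoothness away from the origin using the extension results already in place, iterate on derivatives to obtain $C^\infty$ smoothness through $0$, and construct the formal Taylor series via repeated application of Lemma~\ref{lem:OrderK}.

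For the first step, apply Lemma~\ref{lem:SmExtnCR} (viewing $H$ as a domain in $\C^n \times \R$ via $s = \Re w$) with the open interval $I = (0, \delta_w)$; note $(\partial \Omega)_I$ corresponds to $M \setminus \{0\}$, which contains no CR singular points, and each slice $M \cap \{w = c\}$ is a connected ellipse-like manifold for small $c > 0$. This produces $F \in C^\infty(H \setminus \{0\})$ that is CR on $H \setminus M$ and agrees with $f$ on $M \setminus \{0\}$. Lemma~\ref{lem:ExtnConts} then gives $F \in C(H)$ with $F(0) = f(0)$. For the crucial smoothness-through-the-origin step, I iterate Lemmas~\ref{lem:DerFSm} and \ref{lem:ExtnConts}: by Lemma~\ref{lem:DerFSm}, $F_{z_j}|_M$ and $F_s|_M$ are smooth on all of $M$ including the origin, and these boundary data themselves satisfy the hypotheses of Theorem~\ref{thm:mainlocal}. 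For $n > 1$ they are CR on $M_{CR}$ because $\partial / \partial z_j$ and $\partial / \partial s$ commute with the tangential Cauchy--Riemann operators on each leaf (on a leaf $F$ is holomorphic in $z$, hence so are its derivatives); for $n = 1$ they are leaf-extendible by differentiating the given leaf extension of $f$. Applying Lemma~\ref{lem:ExtnConts} to $F_{z_j}|_M$ and $F_s|_M$ yields continuous functions on $H$ which, by leafwise uniqueness, coincide with $F_{z_j}$ and $F_s$ on $H \setminus \{0\}$, so $F_{z_j}, F_s \in C(H)$. Since $F_{z_j}|_M$ and $F_s|_M$ are smooth and satisfy the same hypotheses, Lemma~\ref{lem:DerFSm} applies to them again; iterating, every partial derivative $\partial^\alpha F$ extends continuously to $H$, and hence $F \in C^\infty(H)$.

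For the formal power series, iterate Lemma~\ref{lem:OrderK}: since $f$ is $O(0)$ we get a weighted-degree-$0$ holomorphic polynomial $P_0(z,w)$ with $(f - P_0)|_M$ of order $1$; applying the lemma to $(f-P_0)|_M$ (which remains smooth and CR/leaf-extendible since $P_0$ is holomorphic in $(z,w)$) yields a weighted-degree-$1$ polynomial $P_1$ with $(f - P_0 - P_1)|_M$ of order $2$, and so on. Let $\hat F(z, w) = \sum_{k \geq 0} P_k(z, w)$. To identify $\hat F$ with the formal Taylor series of the smooth $F$ at $0$, observe that $F - \sum_{k \leq N} P_k$ is the CR extension of boundary data vanishing to order $N + 1$ at $0$ on $M$; the maximum principle on each leaf $\{w = s\}$ together with the elliptic shrinking estimate $|z| \lesssim \sqrt{s}$ on $H$ near $0$ yields $|F(z, s) - \sum_{k \leq N} P_k(z, s)| \lesssim s^{(N+1)/2}$. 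Since $s \leq |(z, s)|$ on $H$, letting $N \to \infty$ forces the $K$-jet of $F - \hat F$ at $0$ to vanish for every $K$, so $\hat F$ is the Taylor series of $F$ at the origin.

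The main obstacle is verifying that the CR/leaf-extendibility hypothesis propagates to $z$- and $s$-derivatives, since this is what makes the iteration in the second step close. This is so because $\partial / \partial z_j$ acts within each leaf (preserving leafwise holomorphicity), and $\partial / \partial s$ commutes with $\partial / \partial \bar z_j$ (so the vanishing of $\bar\partial F$ along each leaf transfers to $F_s$). Combined with the smoothness on $M$ supplied by Lemma~\ref{lem:DerFSm}, each derivative of $F$ is again a function satisfying the hypotheses of Theorem~\ref{thm:mainlocal}, and Lemmas~\ref{lem:DerFSm} and \ref{lem:ExtnConts} apply anew to drive the iteration.
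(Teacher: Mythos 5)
Your proposal is correct and follows essentially the same strategy as the paper: smoothness away from the origin via Lemma~\ref{lem:SmExtnCR}, continuity at the origin via Lemma~\ref{lem:ExtnConts}, and an iteration driven by Lemma~\ref{lem:DerFSm} to promote continuity of each successive derivative through the origin. The paper's proof is very terse, asserting only ``Clearly $F_{z_j}|_M$ and $F_s|_M$ satisfy the hypotheses of Theorem~\ref{thm:mainlocal}'' and leaving the formal power series claim without comment; you supply both pieces explicitly. Your verification that the CR (respectively leaf-extendibility) hypothesis propagates to $F_{z_j}|_M$ and $F_s|_M$ is correct: with $M$ given by $w = \rho$, the intrinsic CR vector fields are $X = \rho_{\bar z_j}\partial_{\bar z_\ell} - \rho_{\bar z_\ell}\partial_{\bar z_j}$, and since $\partial_{\bar z_k}\bigl(G(z,\rho)\bigr) = G_s \rho_{\bar z_k}$ for any $G$ holomorphic along leaves (which $F_{z_j}$ and $F_s$ are, by commuting $\partial_{z_j}$ or $\partial_s$ with $\partial_{\bar z_k}$), one gets $X\bigl(G|_M\bigr) = 0$ identically. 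For the formal power series, your iterated application of Lemma~\ref{lem:OrderK}, the maximum principle on each leaf, and the shrinking estimate $|z| \lesssim \sqrt{s}$ yielding $|F - \sum_{k\le N}P_k| \lesssim s^{(N+1)/2}$ is a legitimate way to identify the formal Taylor series with $\sum_k P_k$; an alternative shortcut, which is likely what the paper has in mind, is simply that once $F \in C^\infty(H)$ and all $\partial F/\partial \bar z_j$ vanish identically on $H$ (by continuity from the interior), every Taylor coefficient of $F$ at $0$ involving a $\bar z_j$ or higher $\bar z$-derivative vanishes automatically, so the $C^\infty$ Taylor series at $0$ is already a power series in $(z, s) = (z, w)$. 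Both routes are sound; yours additionally gives an explicit construction of the series from the data $f$, matching the way Lemma~\ref{lem:OrderK} is invoked in Lemma~\ref{lem:localraext}.
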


\begin{proof}
Lemma~\ref{lem:SmExtnCR} shows $F\in C^\infty(H \setminus \{ 0 \})$.  We
first show $F \in C^{1}(H)$.  We know from Lemma~\ref{lem:ExtnConts}
that $F \in C(H)$.  We also know from Lemma~\ref{lem:DerFSm} that the
derivatives $F_{z_j}$ and $F_s$ extend to functions which are smooth on $M$
(and smooth on $H \setminus \{ 0 \}$ of course).  Clearly $F_{z_j}|_M$ and
$F_s|_M$ satisfy the hypotheses of Theorem~\ref{thm:mainlocal}.  By
Lemma~\ref{lem:ExtnConts} we obtain their continuity and therefore $F$
is $C^1$.  By iterating this procedure we obtain $F \in C^\infty(H)$.
\end{proof}

\begin{lemma} \label{lem:globalsmext}
Let $\Omega$ and $f$ be as in Theorem~\ref{thm:mainglobal}.
Then there exists a function $F \in C^\infty(\overline{\Omega})$ such that
$F$ is CR on $\Omega$ and $F|_{\partial \Omega} = f$.
\end{lemma}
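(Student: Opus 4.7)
The plan is to glue the local smooth extension of Lemma~\ref{lem:localsmext} near each CR singularity with the away-from-singularities extension of Lemma~\ref{lem:SmExtnCR}, using uniqueness of the holomorphic extension on each leaf $\{s = c\}$ to guarantee the pieces agree on overlaps.

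First, by Proposition~\ref{prop:topology}, $\partial\Omega$ has exactly two CR singularities $p_0$, $p_1$, occurring at the extreme values $s_0 < s_1$ of $s$ on $\overline{\Omega}$, and for every $c \in (s_0,s_1)$ the slice $(\Omega)_{\{c\}}$ is a smoothly bounded domain with connected boundary. Near each $p_i$ I would embed $\C^n\times\R$ into $\C^n\times\C$ via $w=s$; since $s$ is real-valued, $\partial\Omega$ is automatically contained in the Levi-flat hypersurface $\{\Im w = 0\}$, so $\partial\Omega$ is holomorphically flat at $p_i$ in the sense of Section~\ref{section:nondegflatelliptic}. Combined with nondegeneracy and ellipticity, the simultaneous diagonalization recalled in that section lets me arrange $\partial\Omega$ locally in the normal form \eqref{eq:typeofsing}, with $\Omega$ corresponding to $\Re w > Q(z,\bar{z})+E(z,\bar{z})$. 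The hypothesis on $f$ (CR for $n>1$, leaf-extendibility for $n=1$) transfers verbatim to this local picture, so Lemma~\ref{lem:localsmext} supplies a smooth extension $F_i$ on a small closed neighborhood $\overline{W_i}\subset\overline{\Omega}$ of $p_i$, CR on $W_i\setminus\{p_i\}$ and equal to $f$ on $\partial\Omega\cap\overline{W_i}$.

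Next, choose $\varepsilon>0$ small enough that $\overline{W_0}\cup\overline{W_1}$ already covers $(\overline{\Omega})_{[s_0,s_0+2\varepsilon]\cup[s_1-2\varepsilon,s_1]}$. On the interval $I=(s_0+\varepsilon,s_1-\varepsilon)$ the hypotheses of Lemma~\ref{lem:SmExtnCR} are met by Step~1, yielding $F_\ast\in C^\infty\bigl((\overline{\Omega})_I\bigr)$ CR on $(\Omega)_I$ with $F_\ast|_{(\partial\Omega)_I}=f$. On each overlap $\overline{W_i}\cap(\overline{\Omega})_I$, both $F_i$ and $F_\ast$ restrict on every leaf $\{s=c\}$ to a continuous function on the connected bounded domain $(\overline{\Omega})_{\{c\}}$ that is holomorphic on the interior and equals $f$ on the boundary; the holomorphic extension on each leaf is unique (by the maximum principle for $n=1$, and by Hartogs--Bochner for $n>1$), so $F_i=F_\ast$ on the overlap. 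Define $F$ piecewise by these three functions; by agreement on overlaps $F$ is well-defined on $\overline{\Omega}$, smooth because each piece is smooth on its own open set, CR on $\Omega$, and restricts to $f$ on $\partial\Omega$.

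The only substantive work is verifying that the local normal form \eqref{eq:typeofsing} of Theorem~\ref{thm:mainlocal} can actually be arranged at each $p_i$, and this is essentially free in the present setting: ellipticity and nondegeneracy are given, while flatness is automatic from $s\in\R$. Thus the main obstacle is not in any new analysis but in the bookkeeping of the coordinate reduction at each singularity and the verification that uniqueness on leaves glues the local and global pieces; once those are in place, the global smoothness of $F$ follows directly from the local statements.
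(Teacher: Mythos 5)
Your proposal is correct and takes essentially the same route as the paper: Proposition~\ref{prop:topology} identifies the two CR singular points, Lemma~\ref{lem:SmExtnCR} handles the extension away from the singularities, Lemma~\ref{lem:localsmext} supplies smoothness at each singular point, and leaf-wise uniqueness of the holomorphic extension makes the pieces match. The paper's version is just terser --- it cites the two lemmas in sequence and leaves the embedding into $\C^{n+1}$, the reduction to normal form \eqref{eq:typeofsing}, and the gluing implicit --- whereas you spell these out, but no new idea is introduced.
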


\begin{proof}
By Proposition~\ref{prop:topology} $\partial \Omega$ is homeomorphic to a
sphere with two CR singular points, say $p_1$ and $p_2$.
Lemma~\ref{lem:SmExtnCR} shows there exists $F\in C^\infty(\overline{\Omega} \setminus \{
p_1 , p_2 \})$ extending $f$.  Then by Lemma~\ref{lem:localsmext} we obtain
$F \in C^\infty(\overline{\Omega})$
\end{proof}

%%%%%%%%%%%%%%%%%%%%%%%%%%%%%%%%%%%%%%%%%%%%%%%%%%%%%%%%%%%%%%%%%%%%%%%%%%%%

\section{The real-analytic case}
\label{section:realanal}

We are ready to prove the real-analytic part of the results:
if the boundary and $f$
are real-analytic then we get a holomorphic extension.  First we
prove the case $n=1$.

\begin{lemma}
Suppose $M, H \subset U = \{ (z,w) \in \C \times \C : 
\sabs{z} < \delta_z, \sabs{w} < \delta_w \}$ are defined by
\begin{equation}
M: w= \sabs{z}^2 + \lambda (z^2 +\bar{z}^2) + E(z,\bar{z}), \qquad 
H:  \begin{cases}\Re w \geq \sabs{z}^2 + \lambda (z^2 +\bar{z}^2)  + E(z,\bar{z}) ,\\
\Im w = 0,\end{cases}
\end{equation}
where $0 \leq \lambda < \frac{1}{2}$, 
$E$ is $O(3)$, real-valued, real-analytic,
and $\delta_z,\delta_w > 0$ are small enough as defined in the introduction.
Suppose $f \colon M \to \C$ is real-analytic and
for every $0< c < \delta_w$, there exists a 
continuous function on $H \cap \{ w = c \}$ that is holomorphic on $(H \setminus M)
\cap \{ w = c \}$ and extends $f|_{M \cap \{ w = c \}}$.

Then there exists a holomorphic function $F$ defined in a
neighborhood of $H$ in $\C^2$ such that $F|_M = f$.
\end{lemma}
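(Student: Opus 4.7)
The strategy is to upgrade the smooth extension $F$ from Lemma~\ref{lem:localsmext} to a holomorphic one by combining the real-analytic regularity away from the CR singularity with a convergent power-series construction \emph{at} the singularity.

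\medskip

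First I invoke Lemma~\ref{lem:localsmext} to obtain $F\in C^\infty(H)$ with $F|_M=f$, $F$ CR on $H\setminus M$, and a formal power series $\widehat F\in\C[[z,w]]$ at the origin. Since $M\setminus\{0\}$ is free of CR singularities and everything is real-analytic there, the ``furthermore'' clause of Lemma~\ref{lem:SmExtnCR} then produces a holomorphic function $\widetilde F\in\sO(V)$ on some open neighborhood $V\subset\C^2$ of $H\setminus\{0\}$ with $\widetilde F|_{H\setminus\{0\}}=F$. The whole problem reduces to extending $\widetilde F$ holomorphically through the origin.

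\medskip

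I plan to do this by summing $\widehat F$ directly. Iterating Lemma~\ref{lem:OrderK} gives polynomials $P_k(z,w)$ of weighted degree $k$ (weight $1$ in $z$, weight $2$ in $w$) such that $\bigl(f-\sum_{j\leq k}P_j\bigr)|_M = O(|z|^{k+1})$, so $\widehat F=\sum_{k\geq 0}P_k$. The coefficients of each $P_k$ are determined by a fixed linear operation on the $k$-th homogeneous residual (after pullback to $M^{quad}$): for $\lambda=0$ the coefficient of $z^jw^m$ in $P_k$ equals the coefficient of $z^{j+m}\bar z^m$ in the current $f_k$, and for $0<\lambda<\tfrac12$ the same rigid identification holds in the Noether basic-invariant basis $\{z,\ z\bar z+\lambda(z^2+\bar z^2)\}$, cf.\ the proof of Lemma~\ref{lem:PolyExtn1}. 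Combined with the Cauchy estimate $|c_{\alpha\beta}|\leq C/r^{|\alpha|+|\beta|}$ for the Taylor coefficients of the real-analytic $f$, this yields
\begin{equation*}
\sup\bigl\{|P_k(z,w)| : |z|\leq\delta,\ |w|\leq\delta^2\bigr\}\ \leq\ C' q^k,\qquad q<1,
\end{equation*}
for all sufficiently small $\delta$, so $G:=\sum_{k\geq 0}P_k$ is holomorphic on the polydisc $U_0=\{|z|<\delta,\ |w|<\delta^2\}$.

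\medskip

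To conclude, both $G$ and $\widetilde F$ restrict to $f$ on $M$, and $M\setminus\{0\}$ is totally real in $\C^2$; the identity theorem forces $G=\widetilde F$ on the connected open set $V\cap U_0$, and the two glue into a holomorphic function on $V\cup U_0$, which is a neighborhood of $H$ in $\C^2$. The main obstacle is the geometric bound on $P_k$ in the previous step --- more precisely, ensuring that the residuals $f-\sum_{j\leq k}P_j$ retain a uniform radius of real-analyticity through the iteration, so that the Cauchy bounds above are uniform in~$k$; this is where the explicit form of the Noether expansion and the homogeneity of the $P_k$ have to be exploited carefully.
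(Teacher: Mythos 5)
Your plan is a genuinely different route from the paper, and it has a gap that you flag but do not close, so as written the proof is incomplete.

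The paper handles this lemma by a case split. When $\rho(z,0)\not\equiv 0$ (so in particular whenever $\lambda>0$), it complexifies and considers the finite holomorphic map $G(z,\xi)=(z,\rho(z,\xi))$, which is generically $k$-to-$1$ near the origin; the candidate extension is the symmetric average $\widetilde F(z,w)=\frac1k\sum_{(\zeta,\xi)\in G^{-1}(z,w)}f(\zeta,\xi)$, which is automatically holomorphic on a full neighborhood of $0$, and one then checks it agrees with the already-built extension away from $0$ by evaluating on a totally real slice. When $\lambda=0$ and $E(z,0)\equiv 0$, the paper first applies Moser's normal form to reduce $M$ exactly to $w=z\bar z$, so that $E$ disappears entirely, and only \emph{then} uses the Cauchy-estimate/power-series argument you propose. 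In that normalized situation the identification $c_{k,j}\mapsto d_{k,j}=c_{k+j,j}$ is exact and the geometric bound on the coefficients of $F$ is immediate.

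Your proposal tries to run the Cauchy-estimate argument directly on $\sum_k P_k$, for general $E$ and general $\lambda$, using the $P_k$ produced by iterating Lemma~\ref{lem:OrderK}. The problem is that the ``rigid identification'' of coefficients you describe holds only on $M^{quad}$. Once $E\not\equiv 0$, each $P_j|_M = P_j(z,\rho)$ differs from $P_j(z,Q)$ by higher-order terms coming from $E$, and those feed into the $k$-th homogeneous part of the residual for every $k>j$. Thus the coefficients of $P_k$ are determined by a genuine recursion involving all earlier $P_j$ and the Taylor coefficients of $E$, not by reading off fixed coefficients of the original $f$. To obtain the geometric bound $\sup_{|z|\le\delta,|w|\le\delta^2}|P_k|\le C'q^k$ you would need a majorant argument controlling this recursion uniformly in $k$, and you explicitly note this is ``where the explicit form \dots\ has to be exploited carefully'' without carrying it out. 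That majorant estimate is precisely the hard point, and the paper circumvents it entirely (via the finite map in the generic case, and via Moser's normal form which kills $E$ in the degenerate case). Until it is supplied, the convergence of $\widehat F$ is not established and the gluing step, while fine in itself, has nothing to glue to at the origin.
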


\begin{proof}
Let $\rho(z,\bar{z}) = \sabs{z}^2 + \lambda (z^2 + \bar{z}^2) + E(z,\bar{z})$.
Write $f(z,\bar{z})$ for the value of $f$ on $M$ at
$\bigl(z,\rho(z,\bar{z})\bigr)$ as usual.
By Lemma~\ref{lem:SmExtnCR}
we obtain an extension $F(z,w)$, holomorphic in some neighborhood of $H \setminus \{ 0 \}$.

As everything in sight is real-analytic, we locally
complexify and treat $z$ and $\bar{z}$ as independent variables.
The proof splits into two cases.

\textbf{Case 1:} Either $\lambda > 0$ or $E(z,0) \not\equiv 0$ (i.e.\
$\rho(z,0) \not\equiv 0$).

Let $G(z,\xi) = \bigl(z,\rho(z,\xi) \bigr)$.  By our assumption
$G^{-1}(0) = \{ 0 \}$, and so
$G$ is a finite holomorphic map at the origin,
which is generically $k$-to-1 for some for some $k$.
The function defined at a generic point by
\begin{equation}
\widetilde{F}(z,w) :=
\frac{1}{k}\sum_{(\zeta,\xi) \in G^{-1}(z,w)}
f(\zeta,\xi)
\end{equation}
is well-defined and holomorphic on a neighborhood of the origin;
it is a symmetric function of a $k$-valued function.  See
e.g.\ \cite{Whitney:book}*{Lemma 8A in chapter 1}.
Without loss of generality suppose
$\delta_z,\delta_w$ are small enough so that $\widetilde{F}$ is defined in
$U$.

Outside of the origin, $(F \circ G)(z,\bar{z}) = f(z,\bar{z})$.
The equality also holds formally at the origin, that is up to arbitrary
order.  For a fixed $z$ and $w$ and any solution $\xi$ of $w = \rho(z,\xi)$,
$(F \circ G)(z,\bar{z}) = (F \circ G)(z,\xi)$ formally at the origin.  As $f$ is
real-analytic we then get $f(z,\xi) = f(z,\bar{z})$ (not only formally).
Let $X$ be a small punctured neighborhood of the origin in the set
$\{ \xi = \bar{z} \}$ in the $(z,\xi)$-space.
The image $G(X)$ is a generic, totally-real submanifold
at most points as $G$ is a local biholomorphism outside a complex
subvariety.
On $G(X)$ we get $F = \widetilde{F}$, and therefore 
$F = \widetilde{F}$ on a neighborhood of $H \setminus \{ 0 \}$.
We are done.

\textbf{Case 2.} Suppose $\lambda = 0$ and $E(z,0) \equiv 0$.

The manifold $M$
has infinite Moser invariant;
successively taking changes of coordinates sending
$z$ to $z + a z^j w^k$ we can make $E$ vanish up to arbitrary order.
Moser~\cite{Moser85} proved there exists a local
biholomorphic change of variables near zero such that $M$ is given by
\begin{equation}
w = z\bar{z} ,
\end{equation}
in these new coordinates.  The change of variables must leave the set
$\Im w = 0$ invariant, and the leaves are still given by $\{ w = c \}$, although
the constants have changed.
Therefore the setup of the problem is the same.
If
\begin{equation}
f(z,\bar{z}) =
\sum c_{k,j} z^k {\bar{z}}^j ,
\end{equation}
Cauchy estimates dictate
$\sabs{c_{k,j}} \leq \frac{M}{\epsilon^{k+j}}$ for some $\epsilon > 0$.
Using the smooth case,
$F(z,s)$ has a
formal power series at the origin in $z$ and $s$,
and so $c_{k,j} = 0$ if $k < j$.
Write $d_{k,j} = c_{k+j,j}$.  Then
\begin{equation}
\sabs{d_{k,j}} = \sabs{c_{k+j,j}} \leq \frac{M}{\epsilon^{k} \epsilon^{2j}} .
\end{equation}
Therefore 
\begin{equation}
F(z,s) = \sum_{j,k} d_{k,j} z^ks^j
\end{equation}
converges.
\end{proof}

We now prove the real-analytic part of Theorems~\ref{thm:mainlocal} (for $n
> 1$)
and \ref{thm:mainglobal}.
For reader convenience we state the two results as two lemmas.

\begin{lemma} \label{lem:localraext}
Let $U$, $M$, $H$, $f$ be as in Theorem~\ref{thm:mainlocal}.
Suppose $n > 1$, and $E$ and $f$ are real-analytic.
Then there exists a holomorphic function $F$ defined in a neighborhood
of $H$ in $\C^{n+1}$ such that $F|_M = f$.
\end{lemma}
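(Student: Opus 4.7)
The plan is to reduce to the $n=1$ real-analytic case just proved, by slicing with complex $2$-planes through the $w$-axis, and then reassemble using a classical theorem on convergence of formal power series. First, Lemma~\ref{lem:localsmext} yields a smooth CR extension $F \in C^\infty(H)$ of $f$ with a formal Taylor series $\hat F(z,w)$ at the origin, and Lemma~\ref{lem:SmExtnCR} extends $F$ to a holomorphic function $\tilde F$ on a neighborhood of $H \setminus \{0\}$ in $\C^{n+1}$. Only the extension through the origin remains.

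For each unit vector $a \in \C^n$, I would consider the $2$-plane $P_a = \{(\xi a, w) : (\xi,w) \in \C^2\}$ and the slice $M_a = M \cap P_a$, which in coordinates $(\xi, w)$ reads
\begin{equation*}
w = \sabs{\xi}^2 + A\xi^2 + \bar A \bar\xi^2 + E(a\xi, \bar a \bar\xi),
\qquad A = \sum_{j=1}^n \lambda_j a_j^2 ,
\end{equation*}
with $\sabs{A} < \tfrac{1}{2}$ since $\sum \sabs{a_j}^2 = 1$ and $\lambda_j < \tfrac{1}{2}$. After a phase rotation in $\xi$ this is the Bishop normal form with invariant in $[0,\tfrac{1}{2})$, and the higher-order perturbation remains $O(3)$, real-analytic, and real-valued. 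Since $F$ is holomorphic along each leaf $H \cap \{w=c\}$, its restriction to $H_a = H \cap P_a$ gives a continuous leafwise extension of $f|_{M_a}$, verifying hypothesis~(ii) of the $n=1$ real-analytic lemma. That lemma produces a holomorphic function $F_a$ on a neighborhood of $H_a$ in $\C^2$ with $F_a|_{M_a} = f|_{M_a}$. Since $M_a \setminus \{0\}$ is totally real in $P_a$ (as can be checked from the Bishop form and the bound $\sabs{A} < \tfrac{1}{2}$), uniqueness of holomorphic extension forces $F_a = \tilde F|_{P_a}$ on their common domain, so the Taylor series of $F_a$ at $0$ equals the restriction of $\hat F$ to $P_a$, which is therefore convergent.

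Every complex line through $0$ in $\C^{n+1}$ lies in some $P_a$, so $\hat F$ is convergent along every complex line through the origin; by the classical theorem of Abhyankar-Moh on line-by-line convergence of formal power series, $\hat F$ then converges in a full neighborhood $V$ of $0$ in $\C^{n+1}$, and its holomorphic sum $G$ agrees with each $F_a$ near $0$ on $P_a$ by matching Taylor series, hence $G = \tilde F$ on $V \setminus \{0\}$. Gluing $G$ with $\tilde F$ then yields the desired holomorphic extension of $f$ to a neighborhood of $H$ in $\C^{n+1}$. The main obstacle is the line-by-line to joint convergence step: the appeal to Abhyankar-Moh is what bridges the gap between knowing $\hat F$ converges on each $2$-plane slice (via the $n=1$ result) and knowing it converges in a full neighborhood of the origin. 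Everything else is a routine combination of the smooth extension, the $n=1$ real-analytic case, and the identity principle.
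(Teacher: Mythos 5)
Your proposal is correct and follows the paper's argument in all essential respects: slice with complex $2$-planes $P_a$ through the $w$-axis, apply the $n=1$ real-analytic lemma to each slice, and deduce convergence of the formal Taylor series in a full neighborhood from its convergence on every complex line through the origin. The only differences are cosmetic: the paper cites Baouendi--Ebenfelt--Rothschild (Theorem 5.5.30, a Baire category argument) for the line-to-neighborhood convergence step rather than Abhyankar--Moh, and you usefully spell out two points the paper's terse proof leaves to the reader, namely that the sliced Bishop invariant $\sabs{A}=\sabs{\sum_j \lambda_j a_j^2}<\tfrac{1}{2}$ and that $F_a$ agrees with the ambient extension by uniqueness of holomorphic extension across the totally real part of $M_a$.
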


\begin{proof}
By Lemma~\ref{lem:SmExtnCR} 
we obtain an extension $F(z,s)$, holomorphic in some neighborhood of $H \setminus \{ 0 \}$.

Via the smooth extension (the smooth case of Theorem~\ref{thm:mainlocal} and in particular Lemma~\ref{lem:OrderK}) 
there is a smooth extension to $H$ with the formal Taylor series at the
origin written as a formal power series in $z$ and $s$.  We must show
this series converges and converges to a holomorphic function that
agrees with the above extension on a neighborhood of $H \setminus \{ 0 \}$.

A formal power series at the origin
which converges when restricted to every line through the origin converges
via a standard Baire category argument
(see e.g.\ \cite{BER:book}*{Theorem 5.5.30}).
We show it converges on every complex-2-dimensional plane
containing the $w$-axis.

Let $v \in \C^n$ with $\snorm{v} = 1$ be given.  Take the
complex-2-dimensional plane $P$ parametrized
by $(\xi,w) \mapsto (\xi v, w)$.  On $P$ apply the $n=1$ result
to find the function extends.  Furthermore the formal power series
for $F$ at $0$ in $z$ and $s$ restricted to $P$ is the power series for the 
extension on $P$.  Therefore on $P$ we get 1) the formal power series
for $F$ in $z$ and $s$ converges and 2) it equals the extension on some open
set of $H \cap P$.  By taking all possible $v$ we get the power series
for $F$ in $z$ and $s$ converges and equals to the extension on some open
subset of $H$.  We are done.
\end{proof}

\begin{lemma}
Let $\Omega$ and $f$ be as in Theorem~\ref{thm:mainglobal} and suppose
$\partial \Omega$ and $f$ are both real-analytic.
Then there exists a holomorphic function $F$ defined in a neighborhood
of $\overline{\Omega}$ in $\C^{n+1}$ such that $F|_{\partial \Omega} = f$.
\end{lemma}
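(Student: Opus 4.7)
The plan is to combine the two extension results already established: Lemma~\ref{lem:SmExtnCR} provides a holomorphic extension in a $\C^{n+1}$-neighborhood of $\overline{\Omega}$ away from the CR singular points, while the real-analytic part of Theorem~\ref{thm:mainlocal} (Lemma~\ref{lem:localraext} for $n>1$, and the preceding lemma for $n=1$) provides a holomorphic extension in a full $\C^{n+1}$-neighborhood of each CR singular point. The task is then to glue these local extensions into a single holomorphic function defined in a $\C^{n+1}$-neighborhood of $\overline{\Omega}$.

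First I would invoke Proposition~\ref{prop:topology} to conclude that $\partial\Omega$ has exactly two CR singular points $p_1,p_2$, necessarily at the extrema of the $s$-coordinate. Let $s_1<s_2$ be these extremal values. Applying the real-analytic conclusion of Lemma~\ref{lem:SmExtnCR} with $I=(s_1,s_2)$ yields a function $F$ holomorphic in some open neighborhood $V_0\subset\C^{n+1}$ of $(\overline{\Omega})_I$, with $F|_{(\partial\Omega)_I}=f$; here the real coordinate $s$ has been complexified into a full complex variable.

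Next I would work locally at each $p_i$. By the standard reductions reviewed in Section~\ref{section:nondegflatelliptic}, after a biholomorphic change of coordinates in $\C^{n+1}$ (identifying the real hyperplane $\C^n\times\R$ with $\{\Im w=0\}$ and putting the quadratic part into diagonal form), $\partial\Omega$ near $p_i$ is realized as a submanifold $M$ of the form \eqref{eq:typeofsing} with real-analytic $E$, and $\overline{\Omega}$ corresponds to $H$ as in \eqref{eq:typeofsingh}. The real-analytic version of Theorem~\ref{thm:mainlocal} then supplies a holomorphic function $F_i$ on an open neighborhood $V_i\subset\C^{n+1}$ of $H$ near $p_i$ with $F_i|_{M}=f$. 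Shrink each $V_i$ so that $V_i$ contains $p_i$ but $V_i\cap V_0$ is connected and meets $\partial\Omega\setminus\{p_1,p_2\}$.

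Finally I would patch: on $V_0\cap V_i$, both $F$ and $F_i$ are holomorphic and both agree with $f$ on the nonempty real-analytic submanifold $(\partial\Omega)\cap V_0\cap V_i$, which lies in $\{\Im w=0\}$ and is a generic real-analytic CR submanifold of $\C^{n+1}$ at its CR points (totally real when $n=1$). Uniqueness of holomorphic extension of a real-analytic CR function from a generic submanifold therefore forces $F=F_i$ on the connected overlap, and the functions $F,F_1,F_2$ assemble into one holomorphic function in an open neighborhood of $\overline{\Omega}$ restricting to $f$ on $\partial\Omega$. The main (mild) obstacle is precisely this patching step: one must choose the neighborhoods $V_0,V_1,V_2$ so that the overlaps are connected and meet a CR portion of $\partial\Omega$, so that the analytic-continuation-from-a-generic-submanifold argument applies cleanly; given the topology from Proposition~\ref{prop:topology} and the fact that $(\partial\Omega)_{CR}$ accumulates at each $p_i$, this is straightforward.
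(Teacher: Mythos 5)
Your proposal is correct and follows essentially the same route as the paper: extend away from the CR singular set using Lemma~\ref{lem:SmExtnCR}, extend locally near each singular point using the real-analytic local result (Lemma~\ref{lem:localraext} for $n>1$, the $n=1$ lemma otherwise), and identify the pieces by uniqueness of holomorphic extension from a generic real-analytic CR submanifold. You spell out the gluing step more explicitly than the paper does, but the underlying argument is the same.
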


\begin{proof}
If $S \subset \partial \Omega$ is the
set of CR singular points, then via Lemma~\ref{lem:SmExtnCR} and the local
extension at CR points we obtain a holomorphic function on some
neighborhood of $\overline{\Omega} \setminus S$.  Then via
Lemma~\ref{lem:localraext} 
the function extends to be holomorphic in some neighborhood of
$\overline{\Omega}$ in $\C^{n+1}$.
\end{proof}

%%%%%%%%%%%%%%%%%%%%%%%%%%%%%%%%%%%%%%%%%%%%%%%%%%%%%%%%%%%%%%%%%%%%%%%%%%%%

\section{Degenerate holomorphically flat CR singularities}
\label{section:degenerate}

We close by making a few remarks on the situation when the CR singularity is
degenerate, but where we still keep the positivity aspect of the
ellipticity.
We focus on the local situation.
Suppose we have the following near the origin in $\C^{n+1}$.
\begin{equation} \label{eq:degenMH}
M : 
w = \rho(z,\bar{z}),
\quad
H : 
\begin{cases}
\Re w \geq \rho(z,\bar{z}),
\\
\Im w = 0 ,
\end{cases}
\end{equation}
where $\rho$ is smooth, real-valued, and positive near the origin (except at the origin where  $\rho(0)=0$).
The condition on positivity implies that
for each small fixed $s \in \R$
the intersection $M \cap \{ w = s \}$ is either empty or compact such
that the diameter of such sets goes to zero as $s \to 0$.  Finally suppose
$0$ is the only CR singularity of $M$.

By Lemma~\ref{lem:SmExtnCR} we obtain the extension smoothly up to
the CR points of $M$.
We also show that the $z_k$ derivatives are bounded.
That is, in the degenerate case,
the best we can obtain is the following proposition.

\begin{prop}
Suppose $H$ and $M$ are closed submanifolds of $U = \{(z,w) \in \C^n \times \C : 
\snorm{z} < \delta_z, \sabs{w} < \delta_w \}$ given by
\eqref{eq:degenMH}, and
$\delta_z,\delta_w > 0$ are small enough as defined in the introduction.

Suppose $f \colon M \to \C$ is smooth and either
\begin{enumerate}[(i)]
\item $n > 1$ and $f$ is a CR function on $M_{CR}$ (the CR points of $M$), or
\item $n = 1$ and for every $0< c < \delta_w$, there exists a 
continuous function on $H \cap \{ w = c \}$, holomorphic on $(H \setminus M)
\cap \{ w = c \}$ extending $f|_{M \cap \{ w = c \}}$
\end{enumerate}

Then there exists a function $F \in C(H) \cap C^\infty(H \setminus \{ 0 \})$ such that
$F$ is CR on $H \setminus M$ and $F|_M = f$.
Furthermore the derivatives $\frac{\partial F}{\partial z_k}$, $1\leq k\leq n$, are locally bounded at $0$.
\end{prop}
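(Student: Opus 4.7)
The plan is to recycle the machinery already developed, with the positivity of $\rho$ substituting for the ellipticity/nondegeneracy hypotheses wherever possible; three things must be verified: smoothness off the origin, continuity at the origin, and the derivative bound.

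First, on any open interval $I\subset\R$ with $\overline{I}\subset(0,\delta_w)$, the assumptions that $\delta_z,\delta_w$ are small enough and that $0$ is the only CR singularity of $M$ guarantee that the slices $M\cap\{w=c\}$ for $c\in I$ are compact, connected and free of CR singular points, so Lemma~\ref{lem:SmExtnCR} applies and produces an $F\in C^\infty(H\setminus\{0\})$ that is CR on $H\setminus M$ and agrees with $f$ off the origin (note that $H\cap\{w=0\}=\{0\}$ by positivity of $\rho$). To extend $F$ continuously through $0$ I would copy the proof of Lemma~\ref{lem:ExtnConts}: the maximum principle on each slice of $H$ gives
\[
|F(z,s)-f(0)|\leq \sup\bigl\{\,|f(\zeta,\bar\zeta)-f(0)|\,:\,(\zeta,s)\in M\cap\{w=s\}\,\bigr\},
\]
and the right-hand side tends to $0$ as $s\to 0$ because positivity of $\rho$ (with the smallness of $\delta_z,\delta_w$) forces $\operatorname{diam}\bigl(M\cap\{w=s\}\bigr)\to 0$.

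For local boundedness of $F_{z_k}$ near the origin I would differentiate the identity $F(z,\rho(z,\bar z))=f(z,\bar z)$ on $M\setminus\{0\}$, treating $z$ and $\bar z$ as independent. Since $F$ is holomorphic in $z$ on each slice, this gives $F_s\,\rho_{\bar z_j}=f_{\bar z_j}$ and $F_{z_k}=f_{z_k}-F_s\,\rho_{z_k}$ on $M$. For $n>1$ the CR equations $\rho_{\bar z_k}f_{\bar z_j}=\rho_{\bar z_j}f_{\bar z_k}$ force the ratios $f_{\bar z_j}/\rho_{\bar z_j}$ to coincide at each CR point, and multiplying through by $\rho_{z_j}=\overline{\rho_{\bar z_j}}$ and summing yields
\[
F_s=\frac{\sum_j f_{\bar z_j}\,\rho_{z_j}}{\sum_j |\rho_{\bar z_j}|^2};
\]
for $n=1$ the same formula holds trivially. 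Since $\rho$ is real, $|\rho_{z_k}|=|\rho_{\bar z_k}|$, and Cauchy--Schwarz then gives
\[
|F_s\,\rho_{z_k}|\leq \frac{\bigl(\sum_j|f_{\bar z_j}|^2\bigr)^{1/2}\bigl(\sum_j|\rho_{\bar z_j}|^2\bigr)^{1/2}|\rho_{\bar z_k}|}{\sum_j|\rho_{\bar z_j}|^2}\leq \Bigl(\sum_j|f_{\bar z_j}|^2\Bigr)^{1/2}\leq \sqrt{n}\,\max_j|f_{\bar z_j}|,
\]
which is uniformly bounded on a compact neighborhood of $0$ in $M$ because $f$ is smooth. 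Hence $|F_{z_k}|\leq C$ on $M\setminus\{0\}$ near the origin; since $F_{z_k}(\cdot,s)$ is holomorphic on the slice $H\cap\{w=s\}$, the maximum principle transfers this bound into the interior of the slice, giving the desired local boundedness on all of $H$.

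The main obstacle is precisely this last estimate: each of $F_s$ and $\rho_{z_k}$ can individually blow up at the degenerate singularity (both the numerator and denominator of the ratio defining $F_s$ vanish there), and the conclusion hinges on the cancellation produced by Cauchy--Schwarz together with the CR hypothesis, which is what makes the combination $F_s\,\rho_{z_k}$ controllable by smooth data on $M$. Without that cancellation a Cauchy-type interior estimate alone would be useless for bounding $F_{z_k}$ uniformly up to $0$, and this is why the proposition only claims boundedness of the $z$-derivatives and neither smoothness at $0$ nor boundedness of $F_s$ or higher-order derivatives.
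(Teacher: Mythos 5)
Your proof follows essentially the same route as the paper: Lemma~\ref{lem:SmExtnCR} off the origin, the maximum-principle continuity argument of Lemma~\ref{lem:ExtnConts}, and the chain-rule identities $f_{\bar z_k}=F_s\,\rho_{\bar z_k}$, $f_{z_k}=F_{z_k}+F_s\,\rho_{z_k}$ plus the maximum principle on slices. The only divergence is that you derive a closed formula for $F_s$ and invoke Cauchy--Schwarz and the CR equations to bound $|F_s\,\rho_{z_k}|$; this works, but it is an unnecessary detour (and gives a worse constant), since the single same-index identity already yields $|F_s\,\rho_{z_k}|=|F_s|\,|\rho_{\bar z_k}|=|f_{\bar z_k}|$ directly, which is exactly the estimate the paper uses.
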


\begin{proof}
Because 0 is the only CR singularity of $M$, $d\rho$ is only zero
at the origin.  Therefore, for small $c > 0$
$M \cap \{ w = c \}$ are smooth compact submanifolds.
Furthermore,
$M \cap \{ w = c \}$ are connected as they are level sets of $\rho$,
and $\rho$ has only a single critical point.

The existence and smoothness of $F$ outside the origin
follows from Lemma~\ref{lem:SmExtnCR} as mentioned
above.  The proof of Lemma~\ref{lem:ExtnConts} works in this setting giving
us the continuity of the extension at the origin.

To show the boundedness of the $\frac{\partial F}{\partial z_k}$ near $0$, we use an approach similar to the one employed in~\eqref{eq:zDerBd}.
Since our context is quite concrete here, we are able to make this more explicit and direct. 
For $s>0$ and $1\le k\le n$, the following hold on $M\setminus\{0\}$ near the origin:
\begin{equation}
f_{z_k} = F_{z_k} + F_s\rho_{z_k}\quad\text{and}\quad
f_{\bar{z}_k}=F_s\rho_{\bar{z}_k} ,
\end{equation}
which gives us
\begin{equation}
\abs{F_{z_k}}  \le \abs{f_{z_k}} + \abs{F_s\rho_{z_k}}  = \abs{f_{z_k}} +
\abs{F_s\rho_{\bar{z}_k}} = \abs{f_{z_k}} + \abs{f_{\bar{z}_k}}\ \text{ on }
M \setminus \{ 0 \}.
\end{equation}
Since $F_{z_k}$'s extend smoothly up to the boundary on each leaf of $H\setminus\{0\}$, we have by the maximum principle
\begin{equation}
\abs{F_{z_k}(z,s)} \le \sup\limits_{(\zeta,s)\in M \setminus \{0\}}\, \abs{F_{z_k}(\zeta,s)} \le \sup\limits_{(\zeta,s)\in M}\, \abs{f_{z_k}(\zeta,\bar{\zeta})} +\abs{f_{\bar{z}_k}(\zeta,\bar{\zeta})}
\end{equation}
for $(z,s)\in H\setminus\{0\}$.
The conclusion follows by noting that $f_{z_k}$ and $f_{\bar{z}_k}$ are smooth on $M$ and hence bounded near the origin.
\end{proof}

This proposition along with Example~\ref{eg:NonDegen} shows that at degenerate CR singularities
it is the derivative in the normal direction (the $s$ direction) that might
blow up, but the function $F$ itself is at least continuous with bounded $z$
derivatives.  The crux
of the extension, where the nondegeneracy is necessary, is in bounding the $s$ derivative.

%%%%%%%%%%%%%%%%%%%%%%%%%%%%%%%%%%%%%%%%%%%%%%%%%%%%%%%%%%%%%%%%%%%%%%%%%%%%

\def\MR#1{\relax\ifhmode\unskip\spacefactor3000 \space\fi%
  \href{http://www.ams.org/mathscinet-getitem?mr=#1}{MR#1}}

\begin{bibdiv}
\begin{biblist}

\bib{BER:book}{book}{
   author={Baouendi, M. Salah},
   author={Ebenfelt, Peter},
   author={Rothschild, Linda Preiss},
   title={Real submanifolds in complex space and their mappings},
   series={Princeton Mathematical Series},
   volume={47},
   publisher={Princeton University Press, Princeton, NJ},
   date={1999},
   pages={xii+404},
   isbn={0-691-00498-6},
   %review={\MR{1668103 (2000b:32066)}},
   review={\MR{1668103}},
}

\bib{Bishop65}{article}{
   author={Bishop, Errett},
   title={Differentiable manifolds in complex Euclidean space},
   journal={Duke Math.\ J.},
   volume={32},
   date={1965},
   pages={1--21},
   issn={0012-7094},
   review={\MR{0200476}},
}

\bib{Burcea}{article}{
  author={Burcea, Valentin},
  title={A normal form for a real 2-codimensional submanifold in
         $\mathbb{C}^{N+1}$ near a CR singularity},
  journal={Adv.\ Math.},
  volume={243},
  year={2013},
  pages={262--295},
  note={\href{http://arxiv.org/abs/1110.1118}{arXiv:1110.1118}},
  review={\MR{3062747}},
} 

\bib{Burcea2}{article}{
   author={Burcea, Valentin},
   title={On a family of analytic discs attached to a real submanifold
   $M\subset{\mathbb C}^{N+1}$},
   journal={Methods Appl.\ Anal.},
   volume={20},
   date={2013},
   number={1},
   pages={69--78},
   issn={1073-2772},
   review={\MR{3085782}},
   doi={10.4310/MAA.2013.v20.n1.a4},
}

\bib{DTZ}{article}{
   author={Dolbeault, Pierre},
   author={Tomassini, Giuseppe},
   author={Zaitsev, Dmitri},
   title={On boundaries of Levi-flat hypersurfaces in ${\mathbb C}^n$},
   language={English, with English and French summaries},
   journal={C.\ R.\ Math.\ Acad.\ Sci.\ Paris},
   volume={341},
   date={2005},
   number={6},
   pages={343--348},
   issn={1631-073X},
   %review={\MR{2169149 (2006e:32048)}},
   review={\MR{2169149}},
   doi={10.1016/j.crma.2005.07.012},
}

\bib{DTZ2}{article}{
   author={Dolbeault, Pierre},
   author={Tomassini, Giuseppe},
   author={Zaitsev, Dmitri},
   title={Boundary problem for Levi flat graphs},
   journal={Indiana Univ.\ Math.\ J.},
   volume={60},
   date={2011},
   number={1},
   pages={161--170},
   issn={0022-2518},
   review={\MR{2952414}},
   doi={10.1512/iumj.2011.60.4241},
}

\bib{Gong94:duke}{article}{
   author={Gong, Xianghong},
   title={Normal forms of real surfaces under unimodular transformations
   near elliptic complex tangents},
   journal={Duke Math.\ J.},
   volume={74},
   date={1994},
   number={1},
   pages={145--157},
   issn={0012-7094},
   %review={\MR{1271467 (95c:32014)}},
   review={\MR{1271467}},
   doi={10.1215/S0012-7094-94-07407-3},
}

\bib{GongLebl}{article}{
   author={Gong, Xianghong},
   author={Lebl, Ji\v{r}\'\i},
   title={Normal forms for CR singular codimension-two Levi-flat submanifolds},
   journal={Pacific J.\ Math.},
   volume={275},
   date={2015},
   number={1},
   pages={115--165},
   doi={10.2140/pjm.2015.275.115},
}

\bib{Harris}{article}{
   author={Harris, Gary Alvin},
   title={The traces of holomorphic functions on real submanifolds},
   journal={Trans.\ Amer.\ Math.\ Soc.},
   volume={242},
   date={1978},
   pages={205--223},
   issn={0002-9947},
   %review={\MR{0477120 (57 \#16664)}},
   review={\MR{0477120}},
}

\bib{HornJohnson}{book}{
   author={Horn, Roger A.},
   author={Johnson, Charles R.},
   title={Matrix analysis},
   publisher={Cambridge University Press, Cambridge},
   date={1985},
   review={\MR{0832183}},
}

\bib{Huang:jams}{article}{
   author={Huang, Xiaojun},
   title={On an $n$-manifold in ${\bf C}^n$ near an elliptic complex tangent},
   journal={J.\ Amer.\ Math.\ Soc.},
   volume={11},
   date={1998},
   number={3},
   pages={669--692},
   issn={0894-0347},
   %review={\MR{1603854 (98m:32026)}},
   review={\MR{1603854}},
   doi={10.1090/S0894-0347-98-00265-3},
}

\bib{HuangKrantz95}{article}{
   author={Huang, Xiaojun},
   author={Krantz, Steven G.},
   title={On a problem of Moser},
   journal={Duke Math.\ J.},
   volume={78},
   date={1995},
   number={1},
   pages={213--228},
   issn={0012-7094},
   %review={\MR{1328757 (96f:32026)}},
   review={\MR{1328757}},
   doi={10.1215/S0012-7094-95-07809-0},
}
\bib{HuangYin09}{article}{
   author={Huang, Xiaojun},
   author={Yin, Wanke},
   title={A Bishop surface with a vanishing Bishop invariant},
   journal={Invent.\ Math.},
   volume={176},
   date={2009},
   number={3},
   pages={461--520},
   issn={0020-9910},
   %review={\MR{2501295 (2010f:32039)}},
   review={\MR{2501295}},
   doi={10.1007/s00222-008-0167-1},
}
\bib{HuangYin09:codim2}{article}{
   author={Huang, Xiaojun},
   author={Yin, Wanke},
   title={A codimension two CR singular submanifold that is formally
   equivalent to a symmetric quadric},
   journal={Int.\ Math.\ Res.\ Not.\ IMRN},
   date={2009},
   number={15},
   pages={2789--2828},
   issn={1073-7928},
   %review={\MR{2525841 (2010f:32038)}},
   review={\MR{2525841}},
   doi={10.1093/imrn/rnp033},
}

\bib{HuangYin:flattening}{unpublished}{
   author={Huang, Xiaojun},
   author={Yin, Wanke},
  title={Flattening of CR singular points and analyticity of local hull of holomorphy},
  note={preprint \href{http://arxiv.org/abs/1210.5146}{arXiv:1210.5146}}
}

\bib{KenigWebster:82}{article}{
   author={Kenig, Carlos E.},
   author={Webster, Sidney M.},
   title={The local hull of holomorphy of a surface in the space of two
   complex variables},
   journal={Invent.\ Math.},
   volume={67},
   date={1982},
   number={1},
   pages={1--21},
   issn={0020-9910},
   %review={\MR{664323 (84c:32014)}},
   review={\MR{664323}},
   doi={10.1007/BF01393370},
}

\bib{KenigWebster:84}{article}{
   author={Kenig, Carlos E.},
   author={Webster, Sidney M.},
   title={On the hull of holomorphy of an $n$-manifold in ${\bf C}^n$},
   journal={Ann.\ Scuola Norm.\ Sup.\ Pisa Cl.\ Sci.\ (4)},
   volume={11},
   date={1984},
   number={2},
   pages={261--280},
   issn={0391-173X},
   %review={\MR{764946 (86d:32019)}},
   review={\MR{764946}},
}

\bib{KhavinsonLundberg}{article}{
   author={Khavinson, Dmitry},
   author={Lundberg, Erik},
   title={A tale of ellipsoids in potential theory},
   journal={Notices Amer.\ Math.\ Soc.},
   volume={61},
   date={2014},
   number={2},
   pages={148--156},
   issn={0002-9920},
   review={\MR{3156681}},
   doi={10.1090/noti1082},
}

\bib{Kytmanov}{book}{
   author={Kytmanov, Alexander M.},
   title={The Bochner-Martinelli integral and its applications},
   %note={Translated from the Russian by Harold P. Boas and revised by the
   %author},
   publisher={Birkh\"auser Verlag, Basel},
   date={1995},
   pages={xii+305},
   isbn={3-7643-5240-X},
   %review={\MR{1409816 (97f:32004)}},
   review={\MR{1409816}},
   doi={10.1007/978-3-0348-9094-6},
}

\bib{LMSSZ}{article}{
   author={Lebl, Ji{\v{r}}{\'{\i}}},
   author={Minor, Andr{\'e}},
   author={Shroff, Ravi},
   author={Son, Duong},
   author={Zhang, Yuan},
   title={CR singular images of generic submanifolds under holomorphic maps},
   journal={Ark.\ Mat.},
   volume={52},
   date={2014},
   number={2},
   pages={301--327},
   issn={0004-2080},
   review={\MR{3255142}},
   doi={10.1007/s11512-013-0193-0},
   note={\href{http://arxiv.org/abs/1205.5309}{arXiv:1205.5309}}
}

\bib{Malgrange}{book}{
   author={Malgrange, B.},
   title={Ideals of differentiable functions},
   series={Tata Institute of Fundamental Research Studies in Mathematics,
   No. 3},
   publisher={Tata Institute of Fundamental Research, Bombay; Oxford
   University Press, London},
   date={1967},
   pages={vii+106},
   %review={\MR{0212575 (35 \#3446)}},
   review={\MR{0212575}},
}

\bib{Moser85}{article}{
   author={Moser, J{\"u}rgen K.},
   title={Analytic surfaces in ${\bf C}^2$ and their local hull of
   holomorphy},
   journal={Ann.\ Acad.\ Sci.\ Fenn.\ Ser.\ A I Math.},
   volume={10},
   date={1985},
   pages={397--410},
   issn={0066-1953},
   review={\MR{802502}},
}

\bib{MoserWebster83}{article}{
   author={Moser, J{\"u}rgen K.},
   author={Webster, Sidney M.},
   title={Normal forms for real surfaces in ${\bf C}^{2}$ near complex
   tangents and hyperbolic surface transformations},
   journal={Acta Math.},
   volume={150},
   date={1983},
   number={3--4},
   pages={255--296},
   issn={0001-5962},
   review={\MR{709143}},
   doi={10.1007/BF02392973},
}

\bib{Sturmfels}{book}{
   author={Sturmfels, Bernd},
   title={Algorithms in invariant theory},
   series={Texts and Monographs in Symbolic Computation},
   edition={2},
   publisher={SpringerWienNewYork, Vienna},
   date={2008},
   pages={vi+197},
   isbn={978-3-211-77416-8},
   review={\MR{2667486}},
   %review={\MR{2667486 (2011j:13009)}},
}

\bib{Whitney:book}{book}{
   author={Whitney, Hassler},
   title={Complex analytic varieties},
   publisher={Addison-Wesley Publishing Co., Reading, Mass.-London-Don
   Mills, Ont.},
   date={1972},
   pages={xii+399},
   review={\MR{0387634}},
}

\end{biblist}
\end{bibdiv}

\end{document}